\patchcmd{\NAT@test}{\else \NAT@nm}{\else \NAT@hyper@{\NAT@nm}}{}{}
\theoremstyle{plain}
\newtheorem{theorem}{Theorem}[section]
\newtheorem{lemma}[theorem]{Lemma}
\newtheorem{conjecture}{Conjecture}
\newtheorem{definition}[theorem]{Definition}
\newtheorem{claim}[theorem]{Claim}
\newtheorem{prop}{}
\newenvironment{Prop}[2][]
{\begin{prop}[#1]}
{\end{prop}}
\newtheorem{inclaim}{Claim}
\newenvironment{Inclaim}[2][]
{\begin{inclaim}}
{\end{inclaim}}
\begin{document}

\title{Strong $3$-Flow Conjecture for Projective Planar Graphs}
\author{J. V. de Jong and R. B. Richter\footnote{Department of Combinatorics and Optimization, University of Waterloo, Canada (jamiev.dejong@gmail.com, brichter@uwaterloo.ca)}}

\maketitle
\begin{abstract}
In 1972, Tutte posed the $3$-Flow Conjecture: that all $4$-edge-connected graphs have a nowhere zero $3$-flow. This was extended by Jaeger et al.~(1992) to allow vertices to have a prescribed, possibly non-zero difference (modulo $3$) between the inflow and outflow. They conjectured that all $5$-edge-connected graphs with a valid prescription function have a nowhere zero $3$-flow meeting that prescription. Kochol (2001) showed that replacing $4$-edge-connected with $5$-edge-connected would suffice to prove the $3$-Flow Conjecture and Lov\'asz et al.~(2013) showed that both conjectures hold if the edge connectivity condition is relaxed to $6$-edge-connected. Both problems are still open for $5$-edge-connected graphs. \\

The $3$-Flow Conjecture was known to hold for planar graphs, as it is the dual of Gr\"otzsch's Colouring Theorem. Steinberg and Younger (1989) provided the first direct proof using flows for planar graphs, as well as a proof for projective planar graphs. Richter et al.~(2016) provided the first direct proof using flows of the Strong $3$-Flow Conjecture for planar graphs. We prove the Strong $3$-Flow Conjecture for projective planar graphs. 
\end{abstract}
\section{Introduction}

A \emph{$\mathbb{Z}_k$-flow} on a graph $G$ is a function that assigns to each edge $e\in E(G)$ an ordered pair consisting of a direction, and a value $f(e)\in\{0,...,k-1\}$, such that if $D$ is the resulting directed graph, then, for each vertex $v\in V(G)$,
$$\sum_{e=(u,v)\in E(D)}f(e)-\sum_{e=(v,w)\in E(D)}f(e)\equiv 0\pmod{k}.$$ 
It is easy to see that every graph $G$ has a $\mathbb{Z}_k$-flow for every value of $k$: set $f(e)=0$ for all $e\in E(G)$. Therefore it is typical to use the following more restrictive concept. A \emph{nowhere zero $\mathbb{Z}_k$-flow} on $G$ is a $\mathbb{Z}_k$-flow on $G$ such that no edge is assigned the value zero. In 1950, Tutte proved that a graph has a nowhere-zero $\mathbb{Z}_k$-flow if and only if it has a nowhere-zero $k$-flow, which requires the net flow through each vertex to be exactly zero; see \citet{diestal05} for a proof of this equivalence and further background on flows.\\

Tutte (cf.~\citet{tutte72}) conjectured that every $4$-edge-connected graph has a nowhere zero $3$-flow. This is known as the $3$-Flow Conjecture, and while progress has been made for many classes of graphs, it is still an open problem. For planar graphs the $3$-Flow Conjecture is equivalent to Gr\"otzsch's Theorem, and \citet{steinbergyounger89} provided a direct proof using flows. \citet{steinbergyounger89} also proved that the $3$-Flow Conjecture holds for graphs embedded in the projective plane. \\

As an extension of $\mathbb{Z}_3$-flows, we add a prescription function, where each vertex in the graph is assigned a value in $\mathbb{Z}_3$ that defines the net flow through the vertex. The prescriptions of the vertices in the graph must sum to zero in $\mathbb{Z}_3$. A graph $G$ is $\mathbb{Z}_3$-connected if for each valid prescription function $p$, $G$ has a nowhere-zero $\mathbb{Z}_3$-flow achieving $p$. This led \citet{jaegar92} to pose the following conjecture.

\begingroup
\def\thetheorem{\ref{jaeger}}
\begin{conjecture}[Strong $3$-Flow Conjecture]
Every $5$-edge-connected graph is $\mathbb{Z}_3$-connected.
\end{conjecture}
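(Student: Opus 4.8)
The plan is to attack Conjecture~\ref{jaeger} through a minimal counterexample, exploiting the strong closure properties of $\mathbb{Z}_3$-connectivity. Let $G$, carrying a prescription $p$, be a counterexample minimising $|V(G)|$ and then $|E(G)|$. First I would run the standard reductions: $G$ has no nontrivial $\mathbb{Z}_3$-connected subgraph $H$ (otherwise contract $H$, push $p$ to the contracted vertex, and use that $G$ is $\mathbb{Z}_3$-connected if and only if $G/H$ is); and, since $G$ is $5$-edge-connected, the induction then turns either on a vertex $v$ of minimum degree --- which is exactly $5$ --- or on a nontrivial $5$-edge cut, the latter dispatched by a gluing argument that transfers a partial flow across the cut. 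This isolates the essential case: a $5$-edge-connected, ``$\mathbb{Z}_3$-irreducible'' graph in which we may work locally at a degree-$5$ vertex.

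The engine of the proof would be a flow-extension lemma in the style of Lov\'asz, Thomassen, Wu and Zhang: for a graph $H$ equipped with a small set $B$ of boundary edges, every prescription on $H$ that is feasible along $B$ extends to a nowhere-zero $\mathbb{Z}_3$-flow of $H$, provided $H$ is sufficiently edge-connected relative to $|B|$. The induction then performs a local reduction at the chosen degree-$5$ vertex $v$ --- splitting off pairs of edges at $v$ via a Mader-type splitting-off theorem, possibly contracting an edge to a neighbour, and updating the prescription --- applies the extension lemma to the strictly smaller graph, and lifts the resulting flow back through the reduction, adjusting the at most five flow values around $v$ so that none is zero and $p(v)$ is met. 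The lift goes through precisely when some nonzero element of $\mathbb{Z}_3$ is avoided by all the values the reduction forces on the edges at $v$, and it is this counting that spends the edge-connectivity hypothesis.

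The decisive obstacle --- and the reason Conjecture~\ref{jaeger} is still open --- is the passage from $6$-edge-connected, the theorem of Lov\'asz et al., down to $5$-edge-connected. The local reduction at a degree-$5$ vertex can drop the edge connectivity below $5$, so that the induction hypothesis no longer applies to the reduced graph; the extra unit of edge connectivity in the $6$-edge-connected proof is exactly the slack that absorbs this loss, and general splitting-off and contraction results do not in general recover it at a degree-$5$ vertex. To close the gap I would pursue one of two routes. Route (i): prove a reduction theorem tailored to $\mathbb{Z}_3$-connectivity, showing that every $5$-edge-connected $\mathbb{Z}_3$-irreducible graph admits a local move that simultaneously shrinks the graph and preserves both $5$-edge-connectivity and irreducibility. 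Route (ii): replace the purely local induction by a global discharging argument that distributes charge over the vertices of $G$ and redistributes it to contradict irreducibility, in the spirit of the flow proofs of Steinberg and Younger and of Richter et al.\ for planar and projective planar graphs. Route (i) collides immediately with the failure of $5$-edge-connectivity under splitting, and route (ii) has no global combinatorial structure to exploit once one leaves embeddings in a fixed surface; I therefore expect that any proof of the full conjecture will have to introduce a genuinely new idea at exactly this step.

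As a feasibility check on the method, and as the natural first milestone, one would first carry out the discharging form of route (ii) for graphs embedded in a fixed surface, where the faces of the embedding provide the discharging structure, Euler's formula provides the global constraint, and the catalogue of reducible configurations around a low-degree vertex is finite. Doing this for the projective plane is the content of the present paper, and it is precisely this surface-bound argument that one would then try to free from the surface in order to reach Conjecture~\ref{jaeger}.
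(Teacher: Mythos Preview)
The statement is Conjecture~\ref{jaeger}, which the paper does not prove: it is stated as an open problem, and the paper's contribution is the special case of graphs embeddable in the projective plane (Theorem~\ref{main1}, via Theorem~\ref{pmain}). There is therefore no proof in the paper to compare your proposal against. Your proposal is not a proof either, and you say so explicitly: you outline the standard minimal-counterexample strategy, correctly locate the obstruction at the passage from $6$-edge-connected (the Lov\'asz--Thomassen--Wu--Zhang theorem) to $5$-edge-connected, and concede that closing this gap would require a new idea. That is an accurate summary of the state of the art, and your final paragraph correctly identifies what the paper actually accomplishes.

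One correction on method: your route~(ii) labels the surface-specific arguments of Steinberg--Younger and Richter et al.\ as ``discharging,'' but the paper's proof of Theorem~\ref{pmain} is not a discharging argument in the usual sense of assigning and redistributing charge against Euler's formula. It is a minimal-counterexample argument that repeatedly reduces small edge-cuts and low-degree configurations near the specified face $F_G$, invoking the planar auxiliary results (Theorems~\ref{mainft} and~\ref{maindts}) whenever a reduction produces a planar piece, until the counterexample is forced into one of the explicit circulant families $A_i$, $B_i$ handled directly by Lemma~\ref{circul}. The embedding is used not to supply charges but to control which side of each cut lies in a disk and is therefore amenable to the earlier planar theorems. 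Your broader point stands, however: the argument is tied to the surface, and nothing in it suggests how to detach it to reach the general conjecture.
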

\endgroup

\citet{jaegar88} had earlier posed the following weaker conjecture.

\begin{conjecture}[Weak 3-Flow Conjecture]
There is a natural number $h$ such that every $h$-edge-connected graph has a nowhere zero $\mathbb{Z}_3$-flow. 
\end{conjecture}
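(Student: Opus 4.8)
The fastest route simply invokes what the introduction already records: Lov\'asz et al.~(2013) proved that every $6$-edge-connected graph is $\mathbb{Z}_3$-connected, and hence, applying that property to the all-zero prescription (equivalently, combining the $3$-flow version with Tutte's equivalence between nowhere-zero $3$-flows and nowhere-zero $\mathbb{Z}_3$-flows), every $6$-edge-connected graph has a nowhere-zero $\mathbb{Z}_3$-flow; so $h=6$ works. The substantive question is how one would obtain such a bound from scratch, and the plan I would follow is Thomassen's method, which first proved the conjecture with $h=8$, together with the refinement of Lov\'asz et al.

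I would not try to construct a flow directly, but instead pass to \emph{orientations}: a nowhere-zero $\mathbb{Z}_3$-flow is the same as an orientation in which $d^+(v)\equiv d^-(v)\pmod 3$ at every vertex, and $\mathbb{Z}_3$-connectivity is the same as the existence, for every boundary $\beta\colon V(G)\to\mathbb{Z}_3$ with $\sum_{v}\beta(v)=0$, of an orientation with $d^+(v)-d^-(v)\equiv\beta(v)\pmod 3$ at every vertex. The second move is to strengthen the target to something that survives an induction on $|V(G)|+|E(G)|$: prove that every graph that is ``$h$-edge-connected apart from a bounded set of prescribed low-degree vertices'' admits a $\beta$-orientation for every admissible $\beta$. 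Allowing a few exceptional vertices is unavoidable, because the reductions below create them.

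The inductive step would be a sequence of reductions performed in order: (i) contract small edge cuts and collapse bundles of parallel edges, transferring their effect onto $\beta$; (ii) if some vertex has degree at most $3$, suppress it or absorb it into a neighbour, again updating $\beta$; (iii) at a vertex $v$ with $d(v)>h$ that is not incident to a bridge, apply Mader's splitting-off (lifting) theorem to replace a carefully chosen pair of edges $vx,vy$ by a single edge $xy$ --- this preserves the local edge-connectivity of every pair of vertices other than $v$, so the graph remains $h$-edge-connected away from the bounded exceptional set --- and iterate until $v$ has small degree, at which point step (ii) applies. Each reduction strictly decreases $|V(G)|+|E(G)|$, so the recursion terminates at a bounded base case checked by hand.

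The hard part will be step (iii): the pair of edges split off at $v$ must be chosen so that the reduced graph still satisfies the strengthened hypothesis --- no forbidden small edge cut may appear at a vertex that is supposed to stay highly connected, and the number of exceptional low-degree vertices must not grow without control --- while the $\beta$-bookkeeping stays consistent throughout. Managing this three-way tension between Mader lifting, the boundary bookkeeping, and the exceptional-vertex budget is where essentially all of the difficulty lies, and it is precisely this that forces the usable constant up to $6$ rather than the $5$ predicted by the Strong $3$-Flow Conjecture.
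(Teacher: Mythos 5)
Your derivation is correct and matches how the paper treats this statement: the paper does not prove the Weak 3-Flow Conjecture itself but records its resolution by citing Thomassen (2012) for $h=8$ and Lov\'asz et al.~(2013) for $h=6$, and your observation that $\mathbb{Z}_3$-connectivity applied to the all-zero prescription yields a nowhere-zero $\mathbb{Z}_3$-flow is exactly the right way to pass from the cited theorem to the conjecture's conclusion. Your further sketch of Thomassen's splitting-off/induction machinery is a reasonable high-level account of the cited proof, but since the paper only invokes these results rather than reproving them, nothing more is required here.
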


\citet{jaegar88} also showed that this conjecture is equivalent to the same statement regarding $\mathbb{Z}_3$-connectivity. The Weak 3-Flow Conjecture remained open until \citet{thomassen12} proved that $h=8$ sufficed. 

\begin{theorem}
Every $8$-edge-connected graph is $\mathbb{Z}_3$-connected. 
\end{theorem}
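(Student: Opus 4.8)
The plan is to follow Thomassen's induction, whose real content is a far more flexible statement than the one displayed. Two relaxations are built in. First, a single vertex $z$ is singled out: it is exempted from the degree hypothesis and, during the induction, allowed to grow arbitrarily, so that $z$ can serve as a ``sink'' into which pieces of the graph are absorbed. Second, the $\mathbb{Z}_3$-flow being sought is allowed to be partially prescribed, with some of the edges incident with $z$ carrying a required nonzero value, subject only to an admissibility condition (those values together with the boundary at $z$ must not be immediately contradictory). I would then prove, by induction on $|V(G)|$: for every graph $G$, vertex $z$, boundary $b\colon V(G)\to\mathbb{Z}_3$ with $\sum_v b(v)=0$, and admissible prescription on a subset of the edges at $z$, if every vertex of $V(G)\setminus\{z\}$ has degree at least $8$ and $G$ is suitably connected away from $z$ (enough that the reductions below do not lose ground), then $G$ has a nowhere-zero $\mathbb{Z}_3$-flow with boundary $b$ extending the prescription. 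The theorem is the case where $G$ is $8$-edge-connected, $z$ is arbitrary, and no edge is prescribed.

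The inductive step runs through a fixed list of reductions, each producing a strictly smaller instance. If some small edge cut away from $z$ can be found, I reduce along it by contracting one shore; if a $\mathbb{Z}_3$-connected subgraph turns up --- for instance a copy of $K_4$ or a bundle of two or more parallel edges --- I contract it (into $z$, if it meets $z$), using standard facts about $\mathbb{Z}_3$-connectivity under contraction to transport the flow back. The central move deals with a vertex $v\ne z$ of large degree that is not entangled with the prescription at $z$: by Mader's splitting-off theorem one repeatedly replaces a suitable pair of edges $vx, vy$ by a single edge $xy$, without harming the connectivity the hypothesis requires, until $v$ has degree $2$ and can be suppressed. Induction applies to the resulting smaller graph, and I lift the flow it provides back to $G$ by routing, through $v$, the necessarily nonzero value assigned to each new edge. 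It then remains to dispose by hand of the degenerate configurations that survive all reductions --- $z$ joined to its neighbours only through bundles of parallel edges, small-degree vertices that the connectivity hypothesis forces adjacent to $z$, and the smallest graphs, where the claim is checked directly.

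The difficulty is not any individual reduction but their calibration: one has to pin down exactly which edges at $z$ may be prescribed, what ``admissible'' means, and the precise connectivity hypothesis, so that simultaneously (i) Mader's theorem can be applied and \emph{iterated} while the connectivity away from $z$ never drops too far, (ii) at every lift there remain enough free choices that no edge is forced to value $0$ and the prescription is honoured, and (iii) the finitely many irreducible base configurations are genuinely solvable. The constant $8$ is exactly the slack that makes the counting behind (i) and (ii) work; a sharper analysis of essentially this scheme by Lov\'asz, Thomassen, Wu and Zhang later lowers it to $6$. The surrounding bookkeeping --- parallel edges and loops, and never letting the splitting-off operations touch a prescribed edge at $z$ --- is routine but must be handled with care.
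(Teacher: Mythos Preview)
The paper does not contain a proof of this theorem. It is stated in the introduction purely as background: the result is attributed to \citet{thomassen12}, with no argument given or even sketched. There is therefore nothing in the paper to compare your proposal against.

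That said, your outline is aimed squarely at Thomassen's own argument, and the broad shape is right: one proves a stronger statement by induction, with a distinguished vertex $z$ exempt from the degree hypothesis and with a partial prescription at $z$, and the main engine is a lifting/splitting reduction that shrinks the graph while preserving the inductive hypotheses. Where your sketch is vaguest is exactly where the real work lies. You invoke Mader's splitting-off theorem at a high-degree vertex $v\ne z$, but Mader requires Eulerian degree or careful control of local edge-connectivity, and you have not said which version you intend or why the hypothesis survives iteration; in Thomassen's paper this is handled by a bespoke lifting lemma rather than a black-box appeal to Mader. Likewise, ``suitably connected away from $z$'' and ``admissible prescription'' are placeholders for precise conditions that have to be stated up front and then checked after every reduction, and the base cases are more delicate than ``checked directly'' suggests. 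As a plan this is on the right track, but as written it is an outline of an outline: the calibration you flag in your last paragraph is the entire proof, and none of it is actually carried out here.
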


\citet{lovasz13} extended this to the following result. 

\begin{theorem}
If $G$ is a $6$-edge-connected graph, then $G$ is $\mathbb{Z}_3$-connected.
\end{theorem}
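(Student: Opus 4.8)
The plan is to pass to the orientation picture: $H$ has a nowhere-zero $\mathbb{Z}_3$-flow with boundary $p$ exactly when $H$ has an orientation with $d^+(v)-d^-(v)\equiv p(v)\pmod 3$ at every vertex, and since $\sum_v\bigl(d^+(v)-d^-(v)\bigr)=0$ such an orientation can exist only if $\sum_v p(v)\equiv 0\pmod 3$; so $\mathbb{Z}_3$-connectivity of $G$ asserts that every such $p$ is realised by an orientation. One basic tool is that this property is preserved under contracting a $\mathbb{Z}_3$-connected subgraph $H$, since an orientation of $G/H$ realising the pushed-down boundary can be repaired inside $H$, the correction demanded on $V(H)$ summing to $0$ automatically; among other things this rules out, in a minimal counterexample, two vertices joined by three or more parallel edges. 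Since $\mathbb{Z}_3$-connectivity itself is not stable under the operations that shrink a graph, the strategy -- Thomassen's method, with the refinement of Lov\'asz, Thomassen, Wu and Zhang -- is to prove, by induction on $|V(G)|+|E(G)|$, a stronger statement carrying a distinguished vertex.

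Roughly, the statement is: if $G$ has a vertex $z$ such that any two vertices of $V(G)\setminus\{z\}$ are joined by at least six edge-disjoint paths and $d_G(z)$ is at least a fixed threshold, then for every $p:V(G)\to\mathbb{Z}_3$ there is an orientation of $G$ realising $p$ at every vertex of $V(G)\setminus\{z\}$ (hence, by the sum identity, at $z$ too whenever $\sum_v p(v)\equiv 0$). The theorem follows: given a $6$-edge-connected $G$ (with small graphs checked directly) one contracts a suitable connected subgraph to produce a vertex of degree above the threshold -- which leaves $G$ $6$-edge-connected -- applies the statement, and lifts the orientation back through the contraction; the choice of subgraph and this last lifting both take some care. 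For the induction, take a minimal counterexample $(G,z,p)$ and look for a reduction staying inside the class. If $d_G(z)$ exceeds the threshold, use Mader's splitting-off theorem at $z$ to lift an admissible pair $zx,zy$ to a single edge $xy$: this preserves all edge-connectivities among $V(G)\setminus\{z\}$, drops $|E(G)|$, and a solution of the smaller graph lifts back, because orienting $zx,zy$ consistently with $xy$ preserves the boundaries at $x$ and $y$ and adds $0$ at $z$. If instead $d_G(z)$ sits at the threshold, one must reduce at a vertex $v\neq z$ -- which has degree at least $6$ -- either by splitting off a pair at $v$ and then re-establishing the hypothesis, or by absorbing $v$ and its neighbourhood into $z$ using the slack that $d_G(z)$ provides; a further reduction decomposes $G$ along any small edge cut by contracting one side, solving the two pieces with matched boundaries, and gluing the orientations across the cut, the six-edge-connectivity being exactly what keeps both pieces in the class.

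When no reduction applies one reaches a base configuration -- in essence $z$ together with a bounded remainder, each vertex of which still has at least two edges to $z$ -- which is oriented directly: orient the edges missing $z$ arbitrarily, then for each remaining vertex orient two of its edges to $z$ to meet its target (two edges already give full flexibility in $\mathbb{Z}_3$), and let the sum identity take care of $z$. The budget at $z$ must be managed throughout so that it never runs out while anything at $z$'s neighbours still needs fixing; when it threatens to, one transfers $z$'s prescribed value onto a neighbour and deletes $z$, which is legitimate only when the deletion leaves the rest $6$-edge-connected and the value transferred is compatible modulo $3$ -- a parity constraint whose cases must all be checked.

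The step I expect to be the real obstacle is pinning down the strengthened statement so that it is simultaneously strong enough to give the theorem, invariant under every lifting, contraction and deletion used (including the nowhere-zero and modulo-$3$ bookkeeping and the changing status of $z$), and equipped with base cases one can actually settle -- and, as part of this, carrying out the reduction at a non-distinguished vertex without permanently damaging the connectivity it was supposed to inherit. Two further pressure points are the exceptional configurations of Mader's theorem, which must be dispatched by ad hoc arguments -- contracting one of the small $\mathbb{Z}_3$-connected gadgets, or a direct orientation count -- and, this being precisely what the Lov\'asz--Thomassen--Wu--Zhang argument adds over the first proof of the Weak $3$-Flow Conjecture, making the whole scheme run with edge-connectivity only $6$, which forces a much tighter inductive hypothesis and a correspondingly more delicate verification that it survives each reduction.
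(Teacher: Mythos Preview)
The paper does not contain a proof of this statement: it is quoted as a result of Lov\'asz, Thomassen, Wu and Zhang (2013) and used as background, so there is no ``paper's own proof'' to compare against. What you have written is therefore not competing with anything in the present paper.

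As to the content of your sketch: it is broadly in the spirit of the actual argument in \cite{lovasz13}, which does proceed by a strengthened inductive statement carrying a distinguished vertex $z_0$ of small degree (with prescribed orientation on its incident edges), and which reduces via contractions and by deleting an edge at a carefully chosen boundary vertex. However, several specifics of your outline do not match the real proof. The genuine inductive hypothesis is not ``$z$ has degree above a threshold'' but rather the opposite: $z_0$ has degree at most $4$ (with a further degree-$5$ vertex $v_0$ allowed), the remaining vertices have degree $\ge 6$, and one shows such a configuration always admits the desired orientation. Mader's splitting-off theorem is not used; the reductions are contractions across small cuts and the deletion of a single boundary edge, with a short case analysis replacing the need for admissible lifts. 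Your passage about ``absorbing $v$ and its neighbourhood into $z$'' and ``transferring $z$'s value onto a neighbour and deleting $z$'' has no counterpart in the published argument. So while the high-level philosophy (strengthen, induct, reduce) is right, the concrete mechanism you propose would need substantial revision before it could be carried out; as written it is a plan rather than a proof, and you yourself flag the formulation of the inductive statement as the main obstacle --- correctly, since that is where the difficulty lies.
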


\citet{kochol01} showed that the $3$-Flow Conjecture is equivalent to the statement that every $5$-edge-connected graph has a nowhere zero $\mathbb{Z}_3$-flow, so the result of \citet{lovasz13} is one step away from the $3$-Flow Conjecture. As stated, both of these results also make significant steps toward the Strong $3$-Flow Conjecture, as they considered $\mathbb{Z}_3$-connectivity. \citet{laili06} proved the Strong $3$-Flow Conjecture holds for planar graphs using the duality with graph colouring. \citet{richteretal16} provided the first direct proof of this result using flows. Their result is Theorem \ref{mainplanar}. 

\begin{theorem}
\label{mainplanar}
Let $G$ be a $3$-edge-connected graph embedded in the plane with at most two specified vertices $d$ and $t$ such that
\begin{itemize}
\item if $d$ exists, then it has degree $3$, $4$, or $5$, has its incident edges oriented and labelled with elements in $\mathbb{Z}_3\setminus\{0\}$, and is in the boundary of the unbounded face,
\item if $t$ exists, then it has degree $3$ and is in the boundary of the unbounded face,
\item there are at most two $3$-cuts, which can only be $\delta(\{d\})$ and $\delta(\{t\})$,
\item if $d$ has degree $5$, then $t$ does not exist, and
\item every vertex not in the boundary of the unbounded face has five edge-disjoint paths to the boundary of the unbounded face. 
\end{itemize}
If $G$ has a valid prescription function, then $G$ has a valid $\mathbb{Z}_3$-flow. 
\end{theorem}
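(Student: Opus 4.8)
The plan is to prove Theorem~\ref{mainplanar} by induction on $|V(G)|+|E(G)|$, working with a minimal counterexample $G$: a plane graph satisfying all the hypotheses, with outer boundary walk $C$ (the boundary of the unbounded face), special vertices $d$ and $t$ if present, and a valid prescription $p$ admitting no valid $\mathbb{Z}_3$-flow. Small cases are direct, so first I would clear away degeneracies. A loop carries net flow $0$, so it can be assigned value $1$ and deleted; if $G$ has a cutvertex (or is disconnected) one splits $G$ at the cut, distributes $p$ and the special vertices so that each side is a valid smaller instance, applies induction, and glues the flows, so $G$ may be taken $2$-connected. Since $G$ is $3$-edge-connected and its only permitted $3$-cuts are $\delta(\{d\})$ and $\delta(\{t\})$, every vertex other than $d,t$ has degree at least $4$; the five-edge-disjoint-paths hypothesis then forces every vertex not on $C$ to have degree at least $5$. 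I would similarly reduce digons and show $C$ is a chordless cycle, cutting $G$ open along any chord or repeated vertex of $C$ and inducting on the two smaller sides, with $d$ or $t$ reassigned to the cut.

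The core step is to locate a reducible configuration on or near $C$. I would run a discharging argument on the plane graph $G$, using the degree data above --- interior degree $\ge 5$, boundary degree $\ge 4$ except at $d$ (degree $3$, $4$, or $5$) and $t$ (degree $3$) --- together with Euler's formula, to extract one of a short list of local pictures along $C$: a degree-$3$ boundary vertex (necessarily $d$ or $t$); a degree-$4$ boundary vertex whose only $C$-neighbours are its cyclic predecessor and successor; two consecutive low-degree boundary vertices; or a short boundary segment bounding a small face. For each such configuration I would perform a surgery strictly decreasing $|V(G)|+|E(G)|$: at $t$, delete $t$ and re-route its three edges into the new boundary; at a degree-$4$ boundary vertex, suppress it after a $Y$--$\Delta$-type local replacement or split off a suitable pair of its edges; and analogously in the remaining cases. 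The result $G'$ is a plane graph with a new outer boundary, a modified prescription $p'$ obtained by pushing the deleted vertices' prescriptions onto their surviving neighbours, and possibly a newly designated $d'$ or $t'$ recording the oriented, $\mathbb{Z}_3\setminus\{0\}$-labelled stub the surgery leaves behind.

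The crux --- and where I expect the real difficulty --- is verifying that $G'$ again satisfies every hypothesis of the theorem: that $G'$ is $3$-edge-connected, that it has at most two $3$-cuts and they are exactly $\delta(\{d'\})$ and $\delta(\{t'\})$, that a created $d'$ has degree at most $5$ and does not coexist with a $t'$ when it has degree $5$, that every interior vertex of $G'$ still has five edge-disjoint paths to the new outer boundary, and that $p'$ is valid. A careless reduction can spawn an illegal interior $3$-cut, sever a needed edge-disjoint path to the boundary, or simultaneously force a degree-$5$ $d'$ and a $t'$; avoiding this requires choosing the configuration and surgery with care, and occasionally a preliminary local modification (contracting a well-chosen edge, or rerouting through the interior) to protect the five-path condition before the main reduction. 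Once $G'$ is certified as a smaller valid instance, induction supplies a valid $\mathbb{Z}_3$-flow on $G'$.

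Finally I would extend that flow back across $G$. At a degree-$3$ vertex every prescription in $\mathbb{Z}_3$ is realized by an appropriate choice of $\pm1$ on its three edges, and the $Y$--$\Delta$ and split-off reductions are arranged so that the local flow lifts (and the fixed labels at any $d'$ are consistent with the lift); this yields a valid $\mathbb{Z}_3$-flow on $G$, contradicting minimality. The bulk of the work, and the main obstacle throughout, is the case analysis showing that each reduction keeps us inside the hypothesis class --- above all, preserving the five-edge-disjoint-paths condition and correctly maintaining the bookkeeping of the two special vertices $d$ and $t$.
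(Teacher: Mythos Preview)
The paper does not prove Theorem~\ref{mainplanar}. It is quoted verbatim as the main result of Richter, Thomassen, and Younger (2016) and used throughout the present paper as a black box (chiefly through its corollaries, Theorems~\ref{maindts} and~\ref{mainft}, which are likewise imported from the companion paper~\cite{paper1}). There is therefore no ``paper's own proof'' to compare your attempt against; the authors' contribution here is the projective-planar Theorem~\ref{pmain}, whose proof repeatedly reduces to the planar case and then invokes Theorem~\ref{mainplanar}.

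As to your sketch on its own merits: the broad architecture---minimal counterexample, strip off loops/parallels/cut-vertices/chords, locate a low-degree boundary vertex, reduce, check the smaller graph still satisfies the hypotheses, lift the flow back---is indeed the shape of the Richter--Thomassen--Younger argument. But what you have written is a plan, not a proof. The entire substance of that paper lies exactly in the steps you flag with phrases like ``I expect the real difficulty'' and ``occasionally a preliminary local modification'': specifying \emph{which} edge-pair to lift at a degree-$4$ boundary vertex, cataloguing the $2$-robust $4$- and $5$-edge-cuts that can appear after a reduction and showing each either cannot occur or can itself be contracted first, and managing the interaction between $d$ and $t$ so that one never produces a degree-$5$ directed vertex together with a surviving $t$. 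None of that case analysis is present here, and there is no discharging argument in the actual proof---the reducible configuration is always found directly at $t$ (or at an arbitrary boundary vertex if $t$ is absent), not via Euler's formula. So while your outline is compatible with the genuine proof, it does not yet contain any of the work.
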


The $3$-Flow Conjecture and the Strong $3$-Flow Conjecture for planar graphs are corollaries of Theorem~\ref{mainplanar}.\\

In this paper we prove the Strong $3$-Flow Conjecture for projective planar graphs, extending Theorem \ref{mainplanar}. This proof requires the preliminary results that appear in \citet{paper1}. All of these results also appear in the first author's Ph.D. thesis \citep{thesis}. The techniques used for the proof in this paper build off those of \citet{thomassen12}, \citet{lovasz13}, \citet{richteretal16}, and \citet{steinbergyounger89}.  \\

In Section \ref{projprelim} we first discuss some of the ideas that will be used throughout this paper. 

\section{Preliminaries}
\label{projprelim}

\subsection*{Basic Definitions}
We define an \emph{orientation} of a graph $G$ to be the directed graph $D$ obtained by adding a direction to each edge. With reference to determining the $\mathbb{Z}_3$-connectivity of $G$ with prescription function $p:V\rightarrow\{-1,0,1\}$, an orientation is \emph{valid} if for each vertex $v\in V(G)$,
$$\sum_{e=(u,v)\in E(D)}1-\sum_{e=(v,w)\in E(D)}1\equiv p(v)\pmod{3}.$$ 
It can be easily seen that this is equivalent to a $\mathbb{Z}_3$-flow.\\

A vertex $d$ in a graph $G$ is a \emph{directed vertex} if all its incident edges are directed. We call this an \emph{orientation} of $d$. We say that an orientation of $G$ \emph{extends} the orientation of $d$ if the direction of the edges at $d$ is maintained. In cases involving a directed vertex we take the term \emph{valid orientation} to include that the orientation extends that of~$d$. \\

We \emph{lift} a pair of edges $uv$ and $vw$ in a graph $G$ by deleting $uv$ and $vw$, and adding an edge $vw$. We define an edge-cut $\delta(A)$ in $G$ to be \emph{internal}\index{internal edge-cut} if either $A$ or $G-A$ does not intersect the boundary of the specified face(s) of $G$. 

\subsection*{Specified Face(s)}

First, we consider the idea of a specified face. Theorem \ref{mainplanar} allows vertices of degree less than $5$ on the boundary of the outer face. However, a graph embedded in a higher genus surface does not have a defined outer face, so the result cannot be directly extended. \\

Let $G$ be a graph with a specified face $F_G$. Let $G'$ be a graph obtained from $G$ by one or more operations. Unless otherwise stated, the specified face $F_{G'}$ is defined as follows:
\begin{enumerate}
\item Suppose that $G'$ is obtained from $G$ by deleting or contracting a connected subgraph of $G$ that has no edge in common with the boundary of $F_G$. Then $F_{G'}=F_G$. 
\item Suppose that $G'$ is obtained from $G$ by contracting a connected subgraph of $G$ that contains the boundary of $F_G$. Then $G'$ has no specified face. A face can be chosen arbitrarily; in general we will chose a specified face incident with the vertex of contraction. 
\item Suppose that $G'$ is obtained from $G$ by deleting an edge $e$ in the boundary of~$F_G$. Let $F$ be the other face incident with~$e$. Then $F_{G'}$ is the face formed by the union of the boundaries of $F$ and $F_G$ (without $e$). 
\item Suppose that $G'$ is obtained from $G$ by deleting a vertex $v$ in the boundary of~$F_G$. Let $F_1$, $F_2$,...,$F_k$ be the other faces incident with~$v$. Then $F_{G'}$ is the face formed by the union of the boundaries of $F_1$, $F_2$,...,$F_k$, and $F_G$ (without the edges incident with $v$).
\item Suppose that $G'$ is obtained from $G$ by contracting a connected subgraph $H$ of $G$ whose intersection with $F_G$ is a path $P$ of length at least one. Then $F_{G'}$ is the face formed by the boundary of $F_G$ without $P$. If the intersection of $H$ with $F_G$ consists of more than one path, this contraction can simply be completed in multiple steps. 
\item Suppose that $G'$ is obtained from $G$ by lifting a pair of adjacent edges $e_1$, $e_2$, where $e_1$ is in the boundary of~$F_G$, $e_2$ is not, and $e_1$ and $e_2$ are consecutive at their common vertex. Let $F_1$ be the other face incident with~$e_1$. Note that $F_1$ is incident with~$e_2$. Let $F_2$ be the other face incident with~$e_2$. Then $F_{G'}$ is the face formed by the union of the boundaries of $F_G$ and $F_2$ (using the lifted edge instead of $e_1$ and $e_2$). 
\end{enumerate}
When performing these operations we will not explicitly state the new specified face unless necessary. 

\subsection*{Edge-Disjoint Paths to the Boundary}

As in \citet{richteretal16}, throughout this paper we are working with graphs for which all vertices not on the boundary of the specified face(s) have at least $5$ edge-disjoint paths to the boundary of the specified face(s). We describe here how reductions to the graph affect this condition. The proof of this result is straightforward, and can be found in \citep{thesis}.\\

\begin{lemma}
Let $G$ be a graph with specified face $F_G$ such that all vertices not on the boundary of $F_G$ have $5$ edge-disjoint paths to the boundary of~$F_G$. Let $G'$ be a graph obtained from $G$ by 
\begin{enumerate}
\item contracting a subgraph $X$ of $G$ that does not intersect the boundary of $F_G$ to a vertex~$x$,
\item deleting a boundary edge $e$ of~$F_G$,
\item deleting a boundary vertex $x$ of~$F_G$,
\item lifting a pair of adjacent edges $e_1$, $e_2$, where $e_1$ is in the boundary of $F_G$, $e_2$ is not, and $e_1$ and $e_2$ are consecutive at their common vertex, or
\item contracting a subgraph $X$ of $G$ whose intersection with $F_G$ is a path~$P$.
\end{enumerate}
Then all vertices not on the boundary of $F_{G'}$ have $5$ edge-disjoint paths to the boundary of~$F_{G'}$. 
\label{edgedisjoint}
\end{lemma}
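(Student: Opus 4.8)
The plan is to recast the hypothesis as a cut condition via Menger's theorem and then check, operation by operation, that a counterexample in $G'$ would pull back to one in $G$. Write $B:=V(\partial F_G)$ and $B':=V(\partial F_{G'})$. By the edge version of Menger's theorem (applied in the graph obtained from $G$ by identifying all of $B$ to a single vertex), a vertex $v$ has five edge-disjoint paths ending on $B$ if and only if $|\delta_G(A)|\ge 5$ for every $A\subseteq V(G)$ with $v\in A$ and $A\cap B=\emptyset$. The hypothesis is that no vertex of $G$ off $\partial F_G$ admits such a set $A$ with $|\delta_G(A)|\le 4$, and we must prove the same for $G'$; so we fix $v\in V(G')\setminus B'$ together with a set $A\subseteq V(G')$ with $v\in A$ and $A\cap B'=\emptyset$, and aim to show $|\delta_{G'}(A)|\ge 5$.

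The first step is to read off $B'$ from the conventions for the specified face recorded above: in (1), contracting an interior subgraph $X$ to $x$, we have $B'=B$ and $x\notin B$; in (2), deleting a boundary edge with ends $x,y$, those ends stay incident with the enlarged face so $B'\supseteq B$; in (3), deleting a boundary vertex $x$, every neighbour of $x$ and every other vertex of $B$ remains on the new boundary, so $B'\supseteq(B\setminus\{x\})\cup N_G(x)$; in (4), lifting $e_1=ab$ with $e_1\in\partial F_G$ and $e_2=bc$ with $e_2\notin\partial F_G$ to the edge $ac$, we have $B'\supseteq B\cup\{c\}$ (the degenerate possibility that the lift leaves $b$ isolated is disposed of by also deleting $b$, which does not change $F_{G'}$); and in (5), contracting a subgraph $X$ whose trace on $\partial F_G$ is a path $P$, we have $B'=(B\setminus V(P))\cup\{x\}$, with no vertex of $V(G)\setminus V(X)$ leaving the boundary. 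The two facts extracted are that every vertex of $G'$ off $\partial F_{G'}$ corresponds to a vertex of $G$ off $\partial F_G$, and that both ends of every edge the operation adds or deletes lie in $B'$ (or were themselves deleted).

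The second step is the pull-back. For the contraction operations (1) and (5): in (5) we automatically have $x\notin A$ because $x\in B'$, while in (1) we split on whether $x\in A$. If $x\notin A$, then, regarding $A$ as a vertex set of $G$, we have $A\cap V(X)=\emptyset$, the edges leaving $A$ agree in $G$ and $G'$, and $A\cap B=\emptyset$ (from $B'\supseteq B\setminus V(X)$ together with $A\cap V(X)=\emptyset$); since $v\in A$ is off $\partial F_G$, the hypothesis gives $|\delta_{G'}(A)|=|\delta_G(A)|\ge 5$. If $x\in A$ (only in case (1)), set $A':=(A\setminus\{x\})\cup V(X)$; since $X$ is connected, $\delta_G(A')=\delta_{G'}(A)$, and since $V(X)\cap B=\emptyset$ and $A\setminus\{x\}$ avoids $B'=B$, the set $A'$ avoids $B$ while containing the (interior) vertices of $X$, so again $|\delta_{G'}(A)|=|\delta_G(A')|\ge 5$. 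For the local operations (2), (3), (4): the vertices appearing in the modified edges --- namely $x,y$; or $x$ and its neighbours; or $a,b,c$ --- all lie in $B'$ or were deleted, hence lie outside $A$, so no added or deleted edge has exactly one end in $A$; therefore $\delta_G(A)=\delta_{G'}(A)$, and since $v$ lies off $\partial F_G$ and $A\cap B=\emptyset$, the hypothesis once more gives $|\delta_{G'}(A)|=|\delta_G(A)|\ge 5$.

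The substantive content is entirely in the first step: the cut pull-backs are immediate once $B'$ is known, so the one place that needs care is verifying, for each operation and against the stated face conventions, that no interior vertex is pushed onto the new boundary and that the ends of every modified edge always land on it --- together with noticing and dismissing the degenerate lift in (4). I therefore expect the main obstacle to be the organizational bookkeeping of the specified face rather than any genuine combinatorial difficulty, consistent with the remark that the proof is straightforward.
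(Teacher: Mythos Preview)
The paper does not actually prove this lemma; it states only that ``the proof of this result is straightforward, and can be found in'' the first author's thesis. Your argument---recasting the hypothesis via Menger's theorem as the cut condition $|\delta_G(A)|\ge 5$ for every nonempty $A$ disjoint from $B$, and then verifying operation by operation that a violating set $A$ in $G'$ pulls back to one in $G$---is correct and is the natural way to establish the result. Your reading of $B'$ from the paper's specified-face conventions is accurate in each case, and the pull-backs in your second step go through as written; in particular, the split on $x\in A$ versus $x\notin A$ in case~(1) and the observation that all endpoints of modified edges in cases~(2)--(4) land in $B'$ are exactly what is needed. (The degenerate isolated-$b$ possibility you flag in case~(4) cannot in fact occur: if $b$ had degree~$2$ then the two faces at $b$ would be $F_G$ and $F_1$, forcing $e_2$ onto the boundary of $F_G$, contrary to hypothesis. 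But noting and dismissing it does no harm.)
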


We therefore only discuss the preservation of this property in cases where Lemma \ref{edgedisjoint} does not apply. 

\subsection*{Minimal Cuts}
Let $G$ be a graph with a directed vertex $d$. An edge-cut $\delta(A)$ in $G$ with $d\in A$ is \emph{$k$-robust} if $|A|\geq 2$ and $|G-A|\geq k$. \\

Throughout this paper we will perform local reductions on graphs. Many of these reductions will involve considering $2$-robust edge-cuts, either because $G$ has a small edge-cut that must be reduced, or because we must verify that the graph resulting from a reduction does not have any small edge-cuts. In all cases, we first consider the smallest possible edge-cuts. Thus, if we consider a $2$-robust $k$-edge-cut $\delta(A)$ in a graph $G$, we may assume that $G$ has no $2$-robust at most $(k-1)$-edge-cut. Thus it may be assumed that either $G[A]$ is connected, or it consists of two isolated vertices whose degrees sum to~$|\delta(A)|$. The same is true of~$G-A$. Given the sizes of the cuts we consider, generally both $G[A]$ and $G-A$ will be connected. 

\subsection*{Non-Crossing 3-Edge-Cuts}

Let $\delta(A)$ and $\delta(B)$ be distinct edge-cuts in $G$. We say that $\delta(A)$ and $\delta(B)$ \emph{cross} if $A\cap B$, $A\setminus B$, $B\setminus A$, and $\overline{A\cup B}$ are all non-empty. Throughout this paper we consider graphs that are allowed to have non-crossing $2$-robust $3$-edge-cuts under certain restrictions. The following well-known result allows us to assume that such cuts are non-crossing. A proof can be found in \citep{thesis}.

\begin{lemma}
Let $k$ be an odd positive interger. If $G$ is a $k$-edge-connected graph, then any two $k$-edge-cuts in $G$ do not cross. 
\end{lemma}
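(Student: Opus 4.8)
The plan is a standard submodularity-plus-parity argument. Suppose, for a contradiction, that $\delta(A)$ and $\delta(B)$ are two $k$-edge-cuts in $G$ that cross, and partition $V(G)$ into the four sets $P_1=A\cap B$, $P_2=A\setminus B$, $P_3=B\setminus A$, and $P_4=V(G)\setminus(A\cup B)$, all of which are nonempty by the definition of crossing. For $i\neq j$ let $e_{ij}$ denote the number of edges of $G$ with one end in $P_i$ and the other in $P_j$. The first step is to record the relevant edge-cut sizes in terms of the $e_{ij}$: one has $|\delta(A)|=e_{13}+e_{14}+e_{23}+e_{24}$ and $|\delta(B)|=e_{12}+e_{14}+e_{23}+e_{34}$, while $|\delta(P_i)|$ is the sum of the three quantities $e_{ij}$ with an index equal to $i$.

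The key step is to compare $|\delta(A)|+|\delta(B)|$ with the cut sizes of the parts. A direct computation from the formulas above gives the two identities
\[ |\delta(A)|+|\delta(B)| \;=\; |\delta(P_1)|+|\delta(P_4)|+2e_{23} \;=\; |\delta(P_2)|+|\delta(P_3)|+2e_{14}. \]
Now I would invoke $k$-edge-connectivity: since each $P_i$ is a nonempty proper subset of $V(G)$ — this is exactly where the crossing hypothesis is used — we have $|\delta(P_i)|\ge k$ for every $i$, while $|\delta(A)|=|\delta(B)|=k$ by assumption. Substituting these bounds into the displayed identities forces $e_{14}=e_{23}=0$ and $|\delta(P_i)|=k$ for all four $i$.

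Finally I would extract the parity contradiction. With $e_{14}=e_{23}=0$, the equalities $|\delta(P_1)|=|\delta(P_2)|=k$ become $e_{12}+e_{13}=k$ and $e_{12}+e_{24}=k$, so $e_{13}=e_{24}$; but also $|\delta(A)|=e_{13}+e_{24}=k$, whence $2e_{13}=k$, contradicting that $k$ is odd. I do not expect a genuine obstacle here: the only point requiring care is the bookkeeping of the $e_{ij}$ and the explicit observation that all four parts of the partition are nonempty, so that the connectivity bound applies to each of them; once the four-part partition is in place the argument is elementary.
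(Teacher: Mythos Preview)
Your argument is correct; it is exactly the standard submodularity-plus-parity proof of this well-known fact. The paper does not actually supply its own proof here---it only cites the first author's thesis---so there is nothing further to compare, but your approach is the canonical one and is almost certainly what appears there.
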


\subsection*{Face Boundaries in the Projective Plane}

Let $G$ be a graph embedded in the plane, and let $F_G$ be a specified face of~$G$. If $G$ does not contain a cut vertex, then we may assume that $F_G$ is bounded by a cycle. If $G$ is embedded in the projective plane, this is not necessarily true. Suppose that $v$ is a vertex that appears more than once in the boundary walk of $F_G$, and assume that $v$ is not a cut vertex. Then there exists a non-contractible curve that passes through only $F_G$ and~$v$. Cut along this curve, and draw the graph on the plane. The result is a planar graph with one specified face containing two copies of~$v$. See Figure \ref{added} for an illustration. Contract the two copies of $v$ to a single vertex. Then $G$ is a planar graph with two specified faces, each containing~$v$. The following result of \citet{paper1} guarantees that such a graph has a valid orientation. 

\begin{definition}
An \emph{FT graph} is a graph $G$ embedded in the plane, together with a valid prescription function $p: V(G)\rightarrow\{-1,0,1\}$, such that:
\begin{enumerate}
\item $G$ is $3$-edge-connected,
\item $G$ has two specified faces $F_G$ and $F_G^*$, and at most one specified vertex $d$ or~$t$,
\item there is at least one vertex in common between $F_G$ and~$F_G^*$,
\item if $d$ exists, then it has degree $3$, $4$, or $5$, is oriented, and is in the boundary of both $F_G$ and~$F_G^*$,
\item if $t$ exists, then it has degree $3$ and is in the boundary of at least one of $F_G$ and~$F_G^*$,
\item $G$ has at most one $3$-edge-cut, which can only be $\delta(\{d\})$ or $\delta(\{t\})$, and
\item every vertex not in the boundary of $F_G$ or $F_G^*$ has $5$ edge-disjoint paths to the union of the boundaries of $F_G$ and~$F_G^*$.
\end{enumerate}
\end{definition}

\begin{theorem}
Every FT graph has a valid orientation. 
\label{mainft}
\end{theorem}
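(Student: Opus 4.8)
The plan is to induct on $|E(G)|$, following the template of \citet{richteretal16} but carrying the second specified face through every reduction. For the base cases I would dispose of small graphs directly: if $G$ has a vertex of degree $\leq 2$ it is forbidden by $3$-edge-connectivity unless it is $d$ or $t$, and a graph on few vertices with the FT hypotheses can be oriented by hand (or reduced to Theorem~\ref{mainplanar} when one of $F_G$, $F_G^*$ is incident with every vertex). For the inductive step, the strategy is to find a local reduction — contracting a $2$-robust $3$-edge-cut, lifting a pair of edges at a low-degree vertex, deleting a boundary edge or boundary vertex, or contracting a path on a face boundary — that produces a strictly smaller graph $G'$ which is again an FT graph (with the specified faces $F_{G'},F_{G'}^*$ defined by the conventions in Section~\ref{projprelim}), apply the inductive hypothesis to get a valid orientation of $G'$, and then lift that orientation back to $G$, adjusting the prescription function and the orientation of $d$ as needed. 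The arithmetic of pushing the prescription across a contracted set, and of choosing directions/labels for the edges that are restored, is exactly as in \citet{richteretal16} and \citet{thomassen12}; Lemma~\ref{edgedisjoint} guarantees that the ``$5$ edge-disjoint paths to the boundary'' condition survives each reduction, and the non-crossing lemma for odd cuts lets us assume the relevant $3$-edge-cuts are nested so that a reduction is available.

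The case analysis would be organized by the smallest structure present, in this order: (i) a $2$-robust $1$- or $2$-edge-cut is impossible by hypothesis except at $d$ or $t$, handled trivially; (ii) a $2$-robust $3$-edge-cut $\delta(A)$ other than $\delta(\{d\}),\delta(\{t\})$ — contract the side not meeting (or meeting least) the specified faces, using item~1 or~2 of the specified-face conventions, verify the contracted graph is FT, and recurse; (iii) no such cut, so $G$ is essentially $4$-edge-connected away from $d,t$, and we look for a vertex $v$ on the boundary of $F_G\cup F_G^*$ of degree $3$, $4$, or $5$ to split off or lift a pair of its edges (choosing the pair consecutive on a face boundary so convention~6 applies), which decreases the edge count while keeping the FT structure; (iv) if every boundary vertex has degree $\geq 6$ and there is no usable directed/degree-$3$ vertex, then because every non-boundary vertex has $5$ edge-disjoint paths to the boundary, a counting/discharging argument on the two faces forces the existence of a face-boundary path that can be contracted (convention~5), or forces the graph to be small enough to be a base case.

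I expect the main obstacle to be case (iii)–(iv): verifying that after a lift or a boundary deletion the resulting graph still has \emph{at most one} $3$-edge-cut and that this cut, if present, is still exactly $\delta(\{d\})$ or $\delta(\{t\})$ — a reduction can easily create a new small cut or a second one, and the whole point of the hypothesis list in the definition of an FT graph is to make the induction close. One must argue, using the non-crossing lemma and the $5$-paths condition, that any newly created $2$-robust $\leq 3$-edge-cut either was already present in $G$ (contradiction with minimality of the reduction chosen) or can be absorbed by choosing a different reduction at the same vertex. The other delicate point is bookkeeping for the directed vertex $d$ when $d$ has degree $5$: its incident edges are pre-oriented and labelled, so any reduction touching $d$ must respect those labels, and one shows that the interaction of $d$ with both $F_G$ and $F_G^*$ (required by part~4 of the definition) leaves enough freedom to complete the orientation; this is where the projective-planar origin of the two-face setup is genuinely used, and it is the part that does not follow formally from the planar theorem.
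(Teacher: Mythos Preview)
This paper does not contain a proof of Theorem~\ref{mainft}. The theorem is imported from the companion paper \citet{paper1} (see the sentence immediately preceding the definition of an FT graph: ``The following result of \citet{paper1} guarantees that such a graph has a valid orientation''), and is used here only as a black box in the proof of Theorem~\ref{pmain}. There is therefore no proof in this paper against which to compare your proposal.

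That said, a brief comment on the proposal itself. Your overall scheme --- induction on $|E(G)|$, reduce small $2$-robust cuts first, then work at low-degree boundary vertices via lifts/deletions/contractions, using Lemma~\ref{edgedisjoint} and the non-crossing lemma to keep the hypotheses intact --- is exactly the template used throughout this paper for Theorem~\ref{pmain} and in \citet{richteretal16} for Theorem~\ref{mainplanar}, and is almost certainly the shape of the argument in \citet{paper1}. But as written it is a plan, not a proof: the substance of these arguments lives entirely in the verification that each specific reduction preserves the hypothesis list, and you have not carried out any of those verifications. Your case~(iv) in particular is a placeholder --- ``a counting/discharging argument on the two faces forces the existence of a face-boundary path that can be contracted, or forces the graph to be small enough'' is not an argument, and if you look at how the analogous endgame is handled for PT graphs in \ref{ptgraph} and the final paragraphs of the proof of Theorem~\ref{pmain}, you will see that what actually happens is a structural classification (the graph is forced to be one of the explicit families $A_i$, $B_i$) followed by a direct orientation, not a discharging count. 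The two-face setting will require its own such classification, and that is where the real work is.
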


\begin{figure}[b]
\begin{center}
\subfloat{
\begin{tikzpicture}
\draw[dashed] (0,0) circle (60pt);
\filldraw (1.05,0) circle (2pt) node[left] {$v$};
\filldraw (0,-1.05) circle (2pt);
\filldraw (0,1.05) circle (2pt);
\filldraw (0,0) circle (0pt) node {$F_G$};

\draw[thick] (0,1.05) to[out=0,in=90] (1.05,0);
\draw[thick, blue] (0,-1.05) to[out=0,in=-90] (1.05,0);



\draw (1.05,0) -- (2.04,0.5);
\draw (1.05,0) -- (2.04,-0.5);

\draw (-2.04,0.5) -- (0,1.05);
\draw (-2.04,-0.5) -- (0,-1.05);
\end{tikzpicture}
}
\subfloat{
\begin{tikzpicture}
\draw[thick] (0,0) to[out=0,in=90] (1.05,-1.05);
\draw[thick, blue] (2.10,0.5) to[out=180,in=-90] (1.05,1.55);
\draw (0,-2) circle (0pt);
\draw (-1.5,0) circle (0pt);
\filldraw (1.05,-1.05) circle (2pt) node[right] {$v$};
\filldraw (1.05,1.55) circle (2pt) node[right] {$v$};
\filldraw (0,0) circle (2pt);
\filldraw (2.10,0.5) circle (2pt);

\draw (0,0) -- (1.05,1.55);
\draw (2.10,0.5) -- (1.05,-1.05);
\end{tikzpicture}
}
\caption{Redrawing $G$ in the plane.}
\label{added}
\end{center}
\end{figure}
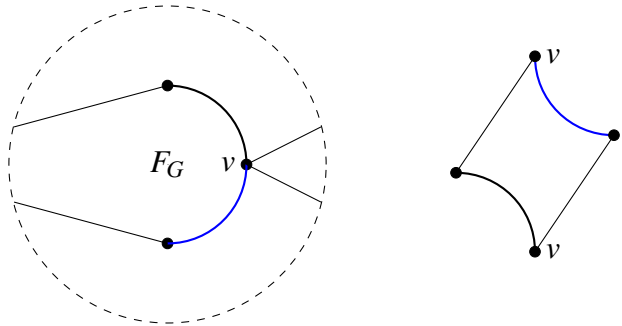

\subsection*{Chords in the Projective Plane}

Let $G$ be a graph embedded in the plane and let $F_G$ be a specified face of~$G$. Suppose that $F_G$ has a chord $e$ with endpoints $u$ and~$v$. Then there exist subgraphs $H$ and $K$ of $G$ such that $H\cap K=\{\{u,v\},\{e\}\}$ and $H\cup K=G$. This is a property that \citet{richteretal16} exploited when proving the Strong $3$-Flow Conjecture for planar graphs, and a property that we use throughout this paper. \\

Now suppose that $G$ is a graph embedded in the projective plane with a cycle~$C$ bounding a closed disk. Let $e$ be a chord of~$C$. If $C+e$ is contained in an open disk, then $e$ is a \emph{contractible} chord. Otherwise it is a \emph{non-contractible} chord. This is relevant in the case where the specified face $F_G$ is bounded by a cycle. If $F_G$ has a contractible chord, we may use techniques analogous to those in the plane to reduce the graph. In the case of a non-contractible chord, the graph is not split into subgraphs as it is in the plane, and we require different techniques. We see here that the deletion of such a chord and its endpoints results in a planar graph. 

\begin{lemma}
Let $G$ be a graph embedded in the projective plane with a specified face~$F_G$ bounded by a cycle. Suppose that $uv$ is a non-contractible chord of $F_G$. Let $G'=G-\{u,v\}$. Then $G'$ is planarly embedded in the projective plane with one specified face; namely the one containing $F_G$. 
\label{noncontr}
\end{lemma}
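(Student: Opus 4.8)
The plan is to work with the Möbius band that is the complement of $F_G$. Since $F_G$ is a face bounded by a cycle, the cycle $C$ bounding $F_G$ bounds a closed disk $\overline{F_G}$ on one side, and the closure $M$ of the other side is a Möbius band with $\partial M=C$. Because $F_G$ is a face, no vertex or edge of $G$ lies in $F_G$, so $G\subseteq M$ and $G\cap\partial M=C$. The hypothesis that $uv$ is a \emph{non-contractible} chord is precisely the statement that the edge $e:=uv$ is an \emph{essential} arc of $M$, i.e.\ that the surface obtained by cutting $M$ along $e$ is a disk; equivalently, $M\setminus(e\cup\{u,v\})$ is simply connected. I would begin by recording this dictionary. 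The only point needing checking is that ``non-contractible chord'' and ``essential arc'' coincide: if $e$ separated a disk $E$ off $M$, then $E\cup\overline{F_G}$ would be a disk containing $C+e$, so $e$ would be a contractible chord; conversely, if $C+e$ lay in an open disk $U$, then the core of the complementary Möbius band would give a non-contractible curve of the projective plane lying in $M$ and disjoint from $e$, contradicting the essentiality of $e$.

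The heart of the argument is the claim that $G'=G-\{u,v\}$ has no cycle that is non-contractible in the projective plane. Let $Z$ be a cycle of $G'$. Every edge of $Z$ is an edge of $G$ other than $e$ having neither endpoint in $\{u,v\}$; by planarity of the drawing of $G$, such an edge is disjoint from $e$ as a point set, and $Z$ contains neither $u$ nor $v$ nor any interior point of $e$. Hence $Z\subseteq M\setminus(e\cup\{u,v\})$, which is simply connected, so $Z$ is null-homotopic in the projective plane. Since a graph embedded in the projective plane with no non-contractible cycle is planarly embedded (its embedding cannot be cellular, as a cellular embedding in the projective plane carries the nontrivial element of the fundamental group on its $1$-skeleton; hence some face is a Möbius band and $G'$ lies in a disk), this shows $G'$ is planarly embedded.

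Finally, I would identify the specified face. Deleting $u$ and then $v$---both of which lie on the boundary of $F_G$---is governed by the face conventions of Section~\ref{projprelim}: deleting a boundary vertex merges the faces incident with it, together with the current specified face, into a single new face. So $F_{G'}$ is a single face of $G'$. Since deleting vertices and edges only enlarges faces, $F_G\subseteq F_{G'}$; and as $F_G$ is a connected subset of the complement of $G'$ it lies in exactly one face of $G'$, which is therefore $F_{G'}$.

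I expect the genuine obstacle to be only the first step---confirming that ``non-contractible chord'' means exactly ``essential arc of $M$'', so that cutting $M$ along $e$ yields a disk rather than a disk together with a smaller Möbius band. Once that topological fact is in hand, the rest is bookkeeping: everything follows because $e$ is an edge of $G$ that is completely destroyed in passing to $G'$, so no cycle of $G'$ can wind around the projective plane without crossing where $e$ was.
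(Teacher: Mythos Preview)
Your argument is correct and is essentially the paper's idea phrased topologically rather than in coordinates. The paper represents $\mathbb{RP}^2$ as a radius-$2$ disk with antipodal boundary identification, puts the boundary cycle of $F_G$ on the unit circle and the chord $e$ on the line $y=0$; then $G'$ misses $y=0$, and cutting along that non-contractible line produces a planar drawing with $F_{G'}$ as the outer face. This is exactly your observation that $M\setminus e$ is a disk containing $G'$, made explicit with a picture.

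One small simplification: the detour through ``$G'$ has no non-contractible cycle, hence is planarly embedded'' is unnecessary. You have already shown that \emph{all} of $G'$, not merely each cycle, lies in the simply connected set $M\setminus e$; that containment is itself a planar embedding. Going straight there also avoids the slightly loose step ``hence some face is a M\"obius band,'' which would need a word more of justification (a non-disk face need not literally be a M\"obius band, though it must contain a non-contractible simple closed curve).
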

\begin{proof}
Consider the projective plane as represented by a circle of radius $2$, where opposite points are identified. Draw the given embedding of $G$ such that the boundary of $F_G$ lies on the unit circle (where the origin is in the specified face), and $e$ is contained in the line $y=0$. The graph $G'$ does not intersect the line $y=0$. Cut the projective plane along the line $y=0$ and identify the opposite points on the circle. The result is a planar embedding of $G'$ where $F_{G'}$ is the outer face. 
\end{proof}

In practice, we will obtain graphs with up to three vertices of degree three. The following result of \citet{paper1} shows that such graphs have a valid orientation. 

\begin{definition}A \emph{DTS graph} is a graph $G$ embedded in the plane, together with a valid $\mathbb{Z}_3$-prescription function $p: V(G)\rightarrow\{-1,0,1\}$, such that:
\begin{enumerate}
\item $G$ is $3$-edge-connected,
\item $G$ has a specified face $F_G$, and at most three specified vertices $d$, $t$, and~$s$,
\item if $d$ exists, then it has degree $3$, $4$, or $5$, is oriented, and is on the boundary of~$F_G$,
\item if $t$ or $s$ exists, then it has degree $3$ and is on the boundary of~$F_G$,
\item $d$ has degree at most $5-a$ where $a$ is the number of unoriented degree $3$ vertices in~$G$,
\item $G$ has at most three $3$-edge-cuts, which can only be $\delta(d)$, $\delta(t)$, and $\delta(s)$, and
\item every vertex not in the boundary of $F_G$ has $5$ edge-disjoint paths to the boundary of~$F_G$.
\end{enumerate}

A \emph{3DTS graph} is a graph $G$ with the above definition, where (6) is replaced by 
\begin{enumerate}
\item[6'.] all vertices other than $d$, $t$, and $s$ have degree at least $4$, and if $d$, $t$, and $s$ all exist, then every $3$-edge-cut in $G$ separates one of $d$, $t$, and $s$ from the other two.
\end{enumerate}
\end{definition}

\begin{theorem}
 \label{3dts}All 3DTS graphs have a valid orientation.
 \label{3cutsdts}
 \label{maindts}
\end{theorem}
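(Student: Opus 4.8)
The plan is to argue by induction on a complexity measure — say the number of edges, with ties broken by the number of vertices — taking a minimal counterexample $G$ and exhibiting a reducible configuration. Since $G$ is $3$-edge-connected it has minimum degree at least $3$, and by condition (6') every degree-$3$ vertex of $G$ is one of $d$, $t$, $s$. The first step is to dispose of the case in which $d$, $t$, and $s$ do not all exist: when at most two specified vertices are present I would redraw $G$ so that $F_G$ becomes the unbounded face and check that the hypotheses of Theorem~\ref{mainplanar} hold — in particular that the degree-$5$ restriction on $d$ is forced, since by condition (5) the presence of an unoriented degree-$3$ vertex bounds $\deg(d)$ by $4$ — unless $G$ has a nontrivial $3$-edge-cut, which is handled in the next step. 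Either way the minimal counterexample cannot have fewer than three specified vertices, so henceforth $d$, $t$, and $s$ all exist with the properties in the definition. (Small cases and basic connectivity reductions are checked at the outset.)

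Next I would reduce along $3$-edge-cuts. By condition (6') every $3$-edge-cut of $G$ separates one of $d,t,s$ from the other two; say $\delta(A)$ separates $s$ from $\{d,t\}$. If $|A|=1$ the cut is $\delta(\{s\})$ and is removed by a local re-routing at $s$; otherwise contract $A$ to a single vertex $a$ and, separately, contract $G-A$ to a single vertex $b$, obtaining two graphs each with strictly fewer edges and vertices. One of them carries $d$, $t$ and the new degree-$3$ vertex $a$; the other carries $s$ and the new degree-$3$ vertex $b$ and so is a $3$-DTS graph with $d$ absent. Transferring prescriptions across $\delta(A)$ so the net flow is consistent, applying the inductive hypothesis to each side and gluing the two orientations along $\delta(A)$ produces a valid orientation of $G$ — provided one verifies that each side is again a $3$-DTS graph, in particular that the $5$-edge-disjoint-paths condition survives (Lemma~\ref{edgedisjoint}), that no new small cut is created, and that $b$ (respectively $a$) lands on the boundary of the inherited specified face. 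This contradicts the choice of $G$, so $G$ has no $3$-edge-cut other than possibly the trivial $\delta(\{d\})$, $\delta(\{t\})$, $\delta(\{s\})$, and those are dealt with similarly.

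With all small cuts removed the argument becomes a planar structural analysis on the boundary of $F_G$. If $F_G$ has a chord $e$ with ends $u,v$ then, as in the Chords subsection, $G=H\cup K$ with $H\cap K=\{\{u,v\},\{e\}\}$; I would orient $H$ and $K$ by induction, transferring prescriptions across $e$ so the two orientations agree on $e$, and paste. If $F_G$ has no chord I would run a discharging argument in the style of \citet{thomassen12}, \citet{lovasz13} and \citet{richteretal16} on the vertices and faces meeting the boundary of $F_G$: assigning each vertex charge $\deg(v)-4$ and each face charge (length of $f$) $-4$ and redistributing, the total charge is controlled by the length of $F_G$ and the number of specified vertices, which forces a reducible configuration — typically a boundary vertex of degree $4$, or a short face sharing an edge with the boundary — at which one lifts a pair of consecutive edges (one on the boundary, one off it) or contracts a short path. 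Each such operation must be shown to preserve $3$-edge-connectivity, the validity of the prescription, the $5$-path condition (Lemma~\ref{edgedisjoint} again), the degree bound (5) on $d$, and condition (6'); then applying the inductive hypothesis to the reduced graph contradicts the choice of $G$.

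The hard part will be the bookkeeping around the three specified vertices. Unlike the two-vertex situation of Theorem~\ref{mainplanar}, here the degree of $d$ is coupled through condition (5) to the number of unoriented degree-$3$ vertices, and the cut condition (6') must be re-established after \emph{every} reduction. In particular, when a reduction ``absorbs'' one of $t$ or $s$ — say by lifting its incident edges or by contraction — one must check both that $d$'s degree bound still holds in the smaller graph and that no $3$-edge-cut there fails the separation requirement; and when $d$ has degree $4$ or $5$, the discharging must account for its extra incident structure and the possibility that a reducible configuration meets $d$. I expect the case analysis at $d$, together with the verification of condition (6') under each reduction, to be the most delicate and lengthy part of the proof.
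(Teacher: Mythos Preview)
The paper does not contain a proof of this theorem. It is stated in the Preliminaries section as a result imported from the companion paper \citep{paper1} (see the sentence immediately preceding the definition of DTS and 3DTS graphs: ``The following result of \citet{paper1} shows that such graphs have a valid orientation''), and the full details are also attributed to the first author's thesis \citep{thesis}. Within the present paper Theorem~\ref{maindts} functions purely as a black box, invoked repeatedly in the proof of Theorem~\ref{pmain} to handle the planar pieces that arise after contracting one side of a small cut or deleting a non-contractible chord. Consequently there is no ``paper's own proof'' here against which your proposal can be compared.

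That said, your outline is broadly in the spirit of how results of this type are established in \citet{richteretal16} and in \citet{paper1}: minimal counterexample, reduce small cuts by contracting each side and appealing to the inductive hypothesis on the smaller pieces, then find a reducible boundary configuration. Two cautions about your sketch as written. First, in your $3$-edge-cut reduction you cannot orient the two sides independently and ``glue'': the side containing $d$ already has its edges at $d$ prescribed, so you must orient that side first, transfer the resulting directions on $\delta(A)$ to make the contracted vertex on the other side a \emph{directed} degree-$3$ vertex, and only then orient the second side. Second, your fallback to Theorem~\ref{mainplanar} when fewer than three specified vertices are present does not quite work as stated, because Theorem~\ref{mainplanar} allows at most two $3$-edge-cuts while a 3DTS graph may have further $3$-edge-cuts satisfying condition~(6'); you would need to reduce those cuts first. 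If you want to see the actual argument, consult \citep{paper1} or \citep{thesis}.
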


%

\subsection*{Circulant and Almost Circulant Graphs}

In the proof of the Strong $3$-Flow Conjecture for graphs in the projective plane, two specific classes of graphs will arise. Let $i$ be an odd integer greater than or equal to $5$, and define $B_i$ to be the circulant graph on $i$ vertices with jumps $1$ and $\frac{i-1}{2}$. Label the vertices $v_1,v_2,...,v_i$. Let $A_i$ be the graph on $i+1$ vertices obtained from $B_i$ by subdividing $v_1v_i$ with a vertex $v_0$, and adding an edge $v_0v_{\frac{i+1}{2}}$. We show that both classes of graphs satisfy the Strong $3$-Flow Conjecture. 

\begin{lemma}
\label{circul}
Let be an odd integer greater than or equal to $5$. Then $B_i$ and $A_i$, along with valid prescription functions, have a valid orientation. 
\end{lemma}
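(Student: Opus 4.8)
The plan is to pass from the projective plane to the plane. I carry the argument out for $B_i$; the graph $A_i$ is handled the same way, its unique degree-$3$ vertex $v_0$ taking the role of a specified vertex $t$ and its degree-$5$ vertex being either an endpoint of the deleted chord or one of the degree-$\ge 4$ vertices that a 3DTS graph permits. Alternatively one may reduce $A_i$ directly to $B_i$: replacing the claw at $v_0$ by a triangle on $N(v_0)=\{v_1,v_i,v_{(i+1)/2}\}$ merely duplicates the two edges $v_1v_{(i+1)/2}$ and $v_iv_{(i+1)/2}$ of $B_i$, and duplicating an edge of a $\mathbb{Z}_3$-connected graph preserves $\mathbb{Z}_3$-connectivity (orient the new copy arbitrarily and move one unit of prescription across it). So it suffices to prove that $B_i$ is $\mathbb{Z}_3$-connected.

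Fix an embedding of $B_i\cong C_i(1,\frac{i-1}{2})$ in the projective plane having a face $F$ bounded by a cycle of which some jump-$\frac{i-1}{2}$ edge $e=uv$ is a non-contractible chord. Write $G'=B_i-\{u,v\}$ and let $\Gamma$ be the subgraph of $B_i$ spanned by $u$, $v$, and the (at most seven) edges meeting $u$ or $v$. Given a valid prescription $p$, first orient $\Gamma$: orient $e$, then orient the three remaining edges at $u$ so that the net at $u$ equals $p(u)$ (three edges realize every element of $\mathbb{Z}_3$), and likewise at $v$. This orientation leaves a discrepancy $c$ on the vertices of $G'$ that are adjacent in $B_i$ to $u$ or $v$; setting $c\equiv 0$ elsewhere and $p'=p-c$ on $V(G')$, and using that the edge-contributions of $\Gamma$ sum to $0$ over all incidences (whence $\sum c=-p(u)-p(v)$), we get that $p'$ is a valid prescription on $G'$.

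By Lemma \ref{noncontr}, $G'$ is planarly embedded with specified face $F_{G'}$, and every vertex of $G'$ lies on $\partial F_{G'}$, so the five-edge-disjoint-paths hypothesis is automatic for $G'$ and remains so for every further reduction (Lemma \ref{edgedisjoint}). Deleting $u$ and $v$ leaves exactly two vertices of degree $2$ and two of degree $3$; suppress the two degree-$2$ vertices (equivalently: lift the two edges at each and delete the resulting isolated vertex). Each such step only doubles an already-present jump-$\frac{i-1}{2}$ edge, so it introduces no new vertex of degree $<4$, and we arrive at a planar multigraph $G''$ whose only vertices of degree $<4$ are two vertices of degree $3$. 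Designate these as the specified vertices $t,s$; after disposing of any forbidden small edge-cut by the minimal-cut and non-crossing-cut conventions of Section \ref{projprelim}, $G''$ is a DTS or 3DTS graph, so Theorem \ref{maindts} gives a valid orientation of $G''$ for the (suppression-updated) $p'$. Un-suppressing and adjoining the chosen orientation of $\Gamma$ produces an orientation of $B_i$ whose net is $p(u)$ at $u$, $p(v)$ at $v$, and $p'(w)+c(w)=p(w)$ at every $w\in V(G')$ — a valid orientation for $p$.

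The step I expect to be the main obstacle is checking that $G''$ really does meet the DTS or 3DTS hypotheses for every odd $i\ge 7$: that the suppressions never expose a small edge-cut outside the permitted set, that the specified-face bookkeeping stays consistent under all the operations, and that the two degree-$3$ vertices can legitimately serve as $t$ and $s$ (in particular that $\delta(\{t\})$ and $\delta(\{s\})$ are the only $3$-edge-cuts). This is a finite but careful analysis, powered by the explicit structure of $B_i$ and of $G'$. The smallest cases $B_5=K_5$ and $A_5$ fall outside this scheme and are dispatched directly — for instance, one reduction turns $K_5$ into a planar FT graph, to which Theorem \ref{mainft} applies.
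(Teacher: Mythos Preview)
Your route is genuinely different from the paper's, and it has a real gap.

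The paper's proof is a five-line greedy argument that uses no machinery at all: lift the pair $v_1v_2,\,v_1v_{(i+3)/2}$, then orient-and-delete the vertices in the explicit order
\[
v_1,\ (v_0),\ v_i,\ v_{(i+1)/2},\ v_{(i-1)/2},\ v_{i-1},\ v_{(i-3)/2},\ v_{i-2},\ \dots,\ v_{(i+5)/2},\ v_2,
\]
so that each listed vertex still has at least two unoriented edges when it is reached. Orienting $v_2$ fixes the lifted edge, hence both of $v_1v_2$ and $v_1v_{(i+3)/2}$, and the single remaining vertex $v_{(i+3)/2}$ is forced to be correct by parity. No embeddings, no DTS theorem, no case analysis.

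Your plan instead invokes Theorem~\ref{maindts} after deleting $u,v$ and suppressing the two resulting degree-$2$ vertices. The step that fails is the suppression. Replacing the two edges at a degree-$2$ vertex $x$ by a single edge $ab$ only models the orientation problem when $p'(x)=0$; if $p'(x)=\pm 1$ the two edges at $x$ must both point the same way, which no orientation of a single edge $ab$ reproduces. You cannot in general arrange $p'(x)=0$ at both degree-$2$ vertices while simultaneously meeting $p(u)$ and $p(v)$, because the two degree-$2$ vertices are each adjacent in $\Gamma$ to \emph{both} $u$ and $v$: once you pin down the four edges $uv_{j-1},\,vv_{j-1},\,uv_{j+(i+1)/2},\,vv_{j+(i+1)/2}$ to force those two prescriptions to vanish, only one free edge remains at each of $u$ and $v$, and a single $\pm 1$ choice cannot realise an arbitrary element of $\mathbb{Z}_3$. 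The phrase ``suppression-updated $p'$'' hides this problem rather than solving it.

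Beyond that, ``disposing of any forbidden small edge-cut by the minimal-cut and non-crossing-cut conventions of Section~\ref{projprelim}'' is not an argument: those conventions are analytic devices inside a minimal-counterexample proof, not tools that remove cuts from a fixed graph. And the small cases and the $Y$--$\Delta$ reduction for $A_i$ are asserted, not proved. Even if all of this were patched, you would be deploying Theorem~\ref{maindts} where a one-paragraph direct ordering suffices.
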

\begin{proof}
We obtain a valid orientation by lifting the pair of edges $v_1v_2$ and $v_1v_{\frac{i+3}{2}}$, and directing and deleting the edges incident with the following vertices in order, to meet each prescription:
$$v_1,(v_0),v_i,v_{\frac{i+1}{2}},v_{\frac{i-1}{2}},v_{i-1},v_{\frac{i-3}{2}},v_{i-2},...,v_{\frac{i+5}{2}},v_2.$$
This directs the edge $v_1v_2$, which in turn directs the edge $v_1v_{\frac{i+3}{2}}$. Since $v_{\frac{i+3}{2}}$ cannot be the only vertex whose prescription is not met, the graph has a valid orientation. 
\end{proof}

\section{Projective Plane}

\label{proj}

Here we prove that the Strong $3$-Flow Conjecture holds for graphs embedded in the projective plane. 

\begin{definition}
A \emph{PT graph} is a graph $G$ embedded in the projective plane, together with a valid prescription function $p:V(G)\rightarrow\{-1,0,1\}$, such that:
\begin{enumerate}
\item $G$ is $3$-edge-connected,
\item $G$ has a specified face $F_G$, and at most one specified vertex~$t$,
\item if $t$ exists, then it has degree $3$ and is in the boundary of~$F_G$,
\item $G$ has at most one $3$-edge-cut, which can only by $\delta(\{t\})$, and
\item every vertex not in the boundary of $F_G$ has $5$ edge-disjoint paths to the boundary of~$F_G$.
\end{enumerate}
We define all $3$-edge-connected graphs on at most two vertices to be PT graphs, regardless of vertex degrees. \\

A \emph{3PT graph} is a graph $G$ with the above definition, where (4) is replaced by 
\begin{enumerate}
\item[4'.] all vertices aside from $t$ have degree at least $4$, and if $t$ exists and $\delta(A)$ is a $3$-edge-cut with $t\in A$, then $A$ is contained in an open disk.
\end{enumerate}
\end{definition}

\begin{theorem}
All PT graphs have a valid orientation.
\label{pmain}
\end{theorem}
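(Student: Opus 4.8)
\emph{Proof proposal.} The plan is to argue by minimal counterexample and to reduce, step by step, to configurations already handled by the planar and plane-with-two-faces results of \citet{paper1} quoted above. Modulo the graphs on at most two vertices, which are dealt with directly, every PT graph is a $3$PT graph: in a PT graph every vertex other than $t$ has degree at least $4$ (a degree-$3$ vertex would give a $3$-edge-cut distinct from $\delta(\{t\})$), and the only permitted $3$-cut $\delta(\{t\})$ is trivially contained in an open disk. So it suffices to treat $3$PT graphs. Suppose the theorem fails, and let $G$ be a $3$PT graph with a valid prescription function $p$ but no valid orientation, chosen with $|E(G)|$ minimum and then $|V(G)|$ minimum. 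I would first run the usual battery of local reductions to show that $G$ is ``irreducible'': contracting the non-boundary side of any small internal edge-cut, lifting pairs of edges at low-degree vertices off the boundary (keeping the $5$-edge-disjoint-paths hypothesis alive via Lemma~\ref{edgedisjoint}), and removing degree-$3$ vertices and superfluous boundary edges all produce strictly smaller $3$PT graphs, or planar or FT instances, whose valid orientations pull back to $G$; so none of these configurations is present.

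Next I would show the specified face $F_G$ is bounded by a chordless cycle. If the boundary walk of $F_G$ repeats a vertex $v$, then either $v$ is a cut vertex (reduce via connectivity), or, exactly as in the construction described in the subsection on face boundaries in the projective plane, one cuts along a non-contractible curve through $F_G$ and $v$, redraws in the plane, and contracts the two copies of $v$, obtaining an FT graph; by Theorem~\ref{mainft} it has a valid orientation, which lifts to one of $G$, a contradiction. So $\partial F_G$ is a cycle $F$. If $F$ has a contractible chord $e=uv$, then $G=H\cup K$ with $H\cap K=\{\{u,v\},\{e\}\}$, and applying minimality to the smaller pieces $H$ and $K$ (with suitably chosen specified faces and vertices) and recombining the orientations gives a contradiction, just as in the planar proof. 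If $F$ has a non-contractible chord $e=uv$, then by Lemma~\ref{noncontr} the graph $G-\{u,v\}$ is planarly embedded, the $5$-path condition survives (Lemma~\ref{edgedisjoint}), and, since $u$ and $v$ have degree at least $4$ unless equal to $t$, deleting them leaves at most three vertices of degree $3$, so $G-\{u,v\}$ with these as $d,t,s$ is a DTS or $3$DTS graph. Theorem~\ref{maindts} orients it, and one then orients the edges at $u$ and $v$ to meet $p(u)$ and $p(v)$ using the slack in their degrees. Either way we contradict the choice of $G$, so $F$ is chordless.

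At this point $G$ is drawn in the M\"obius band $M=\mathbb{RP}^2\setminus F_G$ with $\partial M=F$ a chordless cycle, is $3$-edge-connected with no internal small cuts, and has every non-boundary vertex joined to $F$ by $5$ edge-disjoint paths. If $G$ is planar, Theorem~\ref{mainplanar} finishes the proof, so $G$ has a non-contractible cycle. I would then choose a non-contractible cycle $C$ of $G$ that is innermost --- the M\"obius sub-band $M'$ it cuts off from $F$ has no vertex of $G$ in its interior --- and, using irreducibility, has no chord inside $M'$, so that $G$ lies in the annulus between $F$ and $C$. Cutting the projective plane along $C$ yields a planar graph $\tilde G$ with $F_G$ still a face and outer face bounded by two copies of $C$; the two copies of each edge of $C$, and the splitting of each prescription value at a vertex of $C$, must be treated consistently, and identifying the two copies of a vertex of $C$ realizes precisely the two-specified-face (FT) configuration. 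The remaining work is a bounded case analysis in which each case is settled either by Theorem~\ref{mainplanar}, Theorem~\ref{mainft}, or Theorem~\ref{maindts} applied to the smaller planar reduced graph, or else forces $G$ to be one of the circulant graphs $B_i$ or $A_i$, which have valid orientations by Lemma~\ref{circul}; in every case the orientation is traced back through the cut to $G$, a contradiction.

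The hard part will be precisely this last step. Verifying that the graph obtained by cutting along $C$ (or by the analogous local move in each subcase) still satisfies all the hypotheses of the theorems being invoked --- $3$-edge-connectivity, the restriction to at most three low-degree special vertices with the required degree bounds, and the $5$-edge-disjoint-paths condition, only part of which is covered by Lemma~\ref{edgedisjoint} --- requires a careful structural analysis of an irreducible $G$ near a non-contractible cycle. Equally delicate is the bookkeeping that guarantees a valid orientation of the reduced graph lifts to one of $G$ respecting the identification of the two copies of $C$ and the chosen splitting of prescriptions along $C$, and the argument that the only irreducible graphs escaping every reduction are the circulants $B_i$ and $A_i$. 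Tracking degrees, prescription values, and cut sizes consistently through this chain of reductions is where essentially all the effort lies.
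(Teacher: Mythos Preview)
Your proposal diverges substantially from the paper's argument, and the divergence begins at a step that does not work as stated. You claim that if $F_G$ has a non-contractible chord $e=uv$, then $G-\{u,v\}$ has ``at most three vertices of degree $3$'' and is therefore a DTS or $3$DTS graph. This count is wrong. Each of $u,v$ has degree at least $4$ (or one of them is $t$ of degree $3$), so deleting both removes at least six or seven edge-ends from the rest of the graph; every neighbour of $u$ or $v$ that had degree $4$ drops to degree $3$ (or lower), and there is no reason these neighbours number at most three. Moreover, even granting a valid orientation of $G-\{u,v\}$, your plan to ``orient the edges at $u$ and $v$ to meet $p(u)$ and $p(v)$ using the slack in their degrees'' does not make sense: those edges are gone, and re-inserting them changes the net flow at every neighbour, whose prescription is already satisfied in the orientation you obtained. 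The paper never attempts to make $F_G$ chordless; on the contrary, property PT3 only rules out \emph{contractible} chords incident with a vertex of degree $3$ or $4$, and the proof later \emph{uses} that $tw$ is a non-contractible chord (PT8).

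The paper's actual route is quite different from your cutting-along-an-innermost-noncontractible-cycle plan. After the cut analysis (PT4), the paper shows $t$ must exist (PT5; otherwise a short argument forces $G=B_k$), pins down the degrees of the three neighbours $u,v,w$ of $t$ (PT6, PT7), shows $tw$ is a non-contractible chord (PT8), and that $u,v$ are both adjacent to $w$ (PT9). From there a direct combinatorial argument, walking along the two $tw$-boundary paths and repeatedly lifting/orienting, either produces a valid orientation or forces $G=A_k$, handled by Lemma~\ref{circul}. Your annulus-cutting idea would require, at minimum, controlling the degrees of the doubled vertices along $C$, splitting their prescriptions, and then showing that an orientation of the cut graph can be made compatible under re-identification --- none of which is addressed by Theorems~\ref{mainft} or~\ref{maindts} as stated, since those theorems give you \emph{one} orientation, not one matching a prescribed boundary condition along all of $C$.
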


\begin{proof}
Let $G$ be a minimal counterexample with respect to the number of edges. If $|E(G)|=0$, then $G$ consists of only an isolated vertex, and so has a trivial valid orientation. Thus we may assume $G$ has at least one edge. \\

We will establish the following series of properties of $G$. 
\begin{description}
\item[PT1:] The graph $G$ does not contain a loop, unoriented parallel edges, or a cut vertex.
\item[PT2:] The face $F_G$ is bounded by a cycle. 
\item[PT3:] There is no contractible chord of $F_G$ incident with a degree $3$ or $4$ vertex. 
\end{description}
We define a \emph{Type 1 cut} to be an edge-cut $\delta(A)$ that does not intersect the boundary of $F_G$. We define a \emph{Type 2 cut} to be an edge-cut $\delta(A)$ that has exactly two edges in the boundary of $F_G$. We define all remaining edge-cuts (with at least $4$ edges in the boundary of $F_G$) to be of \emph{Type 3}.
\begin{description}
\item[PT4:] 
\begin{enumerate}[topsep=0pt]
\item The graph $G$ does not contain a $2$-robust at most $4$-edge-cut.
\item If $G$ contains a $2$-robust at most $5$-edge-cut $\delta(A)$, then it is either 
\begin{enumerate}
\item of Type 1, where $A$ is contained in an open disk and the boundary of $F_G$ is in $A$, or
\item of Type 2, where $A$ is contained in an open disk, $t\in A$, and $t$ does not have an incident edge in $\delta(A)$. 
\end{enumerate}
\item If $G$ has a $2$-robust Type 1 $6$-edge-cut $\delta(A)$ where $A$ is contained in an open disk, then the boundary of $F_G$ is in $A$. 
\end{enumerate}
\item[PT5:] The vertex $t$ exists. 
\end{description}
Let $u$ and $v$ be the boundary vertices adjacent to $t$, and let $w$ be the remaining vertex adjacent to~$t$.
\begin{description}
\item[PT6:] Vertices $u$ and $v$ have degree $4$.
\item[PT7:] Vertex $w$ has degree $5$.
\item[PT8:] The edge $tw$ is a chord. 
\item[PT9:] Both $u$ and $v$ are adjacent to $w$. 
\end{description}

The verification of these properties forms the bulk of the proof of Theorem \ref{pmain}. Property \ref{basics} is straightforward, and the proof is omitted. It can be read in \citep{thesis}. \\

\begin{Prop}{PT1}
The graph $G$ does not contain a loop, parallel edges, or a cut vertex. 
\label{basics}
\end{Prop}

\begin{Prop}{PT2}
The face $F_G$ is bounded by a cycle.
\label{cycle}
\end{Prop}
\begin{proof}
Suppose that $F_G$ is not bounded by a cycle. Then there exists a vertex $v$ that appears twice on the boundary walk of~$F_G$. By \ref{basics}, $v$ is not a cut vertex. As shown in Section \ref{projprelim}, $G$ is a planar graph with two specified faces, each containing $v$, and $G$ has at most one vertex of degree~$3$. Hence $G$ is an FT graph and has a valid orientation by Theorem \ref{mainft}, a contradiction. 
\end{proof}


\begin{claim}
The graph $G$ has no $2$-robust Type 1 cut $\delta(A)$ of size at most $6$ where $A$ is contained in an open disk and the boundary of $F_G$ is in~$G-A$. 
\label{type1}
\end{claim}
\begin{proof}
Suppose such a $\delta(A)$ exists in~$G$. Then by definition, $\delta_G(A)$ is at least a $5$-edge-cut. Let $G'$ be the graph obtained from $G$ by contracting $A$ to a vertex. It is clear that $G'$ is a PT graph, and thus has a valid orientation by the minimality of~$G$. Transfer this orientation to $G$. Let $G''$ be the graph obtained from $G$ by contracting $G-A$ to a vertex. Then $G''$ is a graph where all vertices have degree at least $5$, with a directed vertex $d'$ of degree $5$ or~$6$. Now $G[A]$ is planarly embedded by construction, where every vertex adjacent to $d'$ is on the outer face. Therefore, $G''$ can be embedded on the plane by inserting $d'$ into the outer face of $G[A]$. Choose the specified face to be incident with~$d'$. \\

If $d'$ has degree $5$, then $G''$ is a DTS graph and has a valid orientation by Theorem~\ref{maindts}. If $d'$ has degree $6$, delete one boundary edge incident with $d'$ to form a graph~$\bar{G}$. If $\bar{G}$ has a $2$-robust at most $3$-edge-cut, then $G$ has a $2$-robust at most $4$-edge-cut of Type 1, a contradiction. Thus $\bar{G}$ is a DTS graph and has a valid orientation by Theorem~\ref{maindts}. This yields a valid orientation of $G$, a contradiction. Hence no such cut exists. 
\end{proof}

\begin{claim}
The graph $G$ has no $2$-robust Type 2 cut $\delta(A)$ of size $4$. 
\label{4type2}
\end{claim}
\begin{proof}
Suppose such a cut $\delta(A)$ exists in~$G$. Either $A$ or $G-A$ is contained in an open disk. Without loss of generality, suppose that $A$ is contained in an open disk. Let $G'$ be the graph obtained from $G$ by contracting $A$ to a vertex. It is clear that $G'$ is a PT graph, and thus has a valid orientation by the minimality of~$G$. Transfer this orientation to~$G$. Let $G''$ be the graph obtained from $G$ by contracting $G-A$ to a vertex~$v$. Now $G[A]$ is planarly embedded by construction, where every vertex adjacent to $v$ is on the outer face. Therefore, $G''$ can be embedded on the plane by inserting $v$ into the outer face of~$G[A]$. Choose the specified face to be incident with $v$ and all vertices in $A$ incident with~$F_G$. \\

Then $G''$ is a planar graph with a specified face containing a directed vertex of degree~$4$. Hence $G''$ is a DTS graph and has a valid orientation by Theorem~\ref{maindts}. This yields a valid orientation of $G$, a contradiction. Hence no such cut exists. 
\end{proof}

\begin{claim}
The graph $G$ has no $2$-robust Type 2 cut of size $5$ where $A$ is contained in an open disk and $t\in G-A$. 
\label{type25}
\end{claim}
\begin{proof}
Suppose such a cut exists in~$G$. Let $G'$ be the graph obtained from $G$ by contracting $A$ to a vertex. It is clear that $G'$ is a PT graph, and thus has a valid orientation by the minimality of~$G$. Transfer this orientation to~$G$. Let $G''$ be the graph obtained from $G$ by contracting $G-A$ to a vertex. Then $G''$ can be embedded as a planar graph with a specified face containing a directed vertex of degree $5$ but no degree $3$ vertex. Hence $G''$ is a DTS graph and has a valid orientation by Theorem~\ref{maindts}. This yields a valid orientation of $G$, a contradiction. Hence no such cut exists. 
\end{proof}

In order to complete our analysis of Type 2 cuts, we must consider contractible chords. 

\begin{Prop}{PT3}
There is no contractible chord of $F_G$ incident with a degree $3$ or $4$ vertex. 
\label{chordspt}
\end{Prop}
\begin{proof}
Suppose that such a chord $e=uv$ exists where the degree of $u$ is $3$ or~$4$. Let $H$ and $K$ be subgraphs of $G$ such that $H\cap K=\{\{u,v\},\{e\}\}$, $H\cup K=G$, and $K$ is contained in an open disk. \\

Both $\delta(H)$ and $\delta(K)$ are $2$-robust, else $G$ has unoriented parallel edges. Both cuts are of Type~2. Therefore, $|\delta(H)|, |\delta(K)|\geq 5$, and so $deg(v)\geq 8$. \\

It is clear that $G/K$ is a PT graph and has a valid orientation by the minimality of~$G$. Transfer this orientation to $G$, and orient~$u$. Add a directed edge $e$ from $u$ to $v$ in $K$ (in the boundary of~$F_K$). Then $K+e$ can be embedded as a planar graph with a single specified face, a directed vertex of degree $3$ or $4$, and one other possible degree $3$ vertex~$t$. Hence $K+e$ is a DTS graph and has a valid orientation by Theorem~\ref{maindts}. This leads to a valid orientation of $G$, a contradiction. 
\end{proof}

\begin{claim}
The graph $G$ has no $2$-robust Type 2 cut of size $5$ where $A$ is contained in an open disk; $t\in A$; and $\delta(t)\cap\delta(A)$ is a single edge that is a boundary edge of~$F_G$. 
\label{type252}
\end{claim}
\begin{proof}
Suppose such a cut exists in~$G$. Let $G'$ be the graph obtained from $G$ by contracting $A$ to a vertex. Since $G'$ is a PT graph, it has a valid orientation by the minimality of~$G$. Transfer this orientation to~$G$. Let $G''$ be the graph obtained from $G$ by contracting $G-A$ to a vertex. Then $G''$ can be embedded as a planar graph with a specified face containing a directed vertex of degree $5$ adjacent to~$t$. Since $A$ is contained in an open disk, $t$ is not incident with a chord by \ref{chordspt}. Let $\hat{G}$ be the graph obtained from $G''$ by orienting and deleting~$t$. Then $\hat{G}$ has a directed vertex of degree $4$ and at most one vertex of degree~$3$. If $\hat{G}$ contains a $2$-robust at most $3$-edge-cut, then $G$ contains an at most $4$-edge-cut of Type 1 or 2, a contradiction. Hence $\hat{G}$ is a DTS graph and has a valid orientation by Theorem~\ref{maindts}. This yields a valid orientation of $G$, a contradiction. Hence no such cut exists. 
\end{proof}

\begin{claim}
The graph $G$ has no $2$-robust Type 3 cut of size at most $5$. 
\label{type3}
\end{claim}
\begin{proof}
Suppose that such a cut exists. Without loss of generality, suppose that $t\in A$. Let $G'$ be the graph obtained from $G$ by contracting $A$ to a single vertex. Since $G'$ is a PT graph, it has a valid orientation by the minimality of~$G$. Transfer this orientation to~$G$. Let $G''$ be the graph obtained from $G$ by contracting $G-A$ to a single directed vertex $d$ of degree $4$ or~$5$. Then $d$ appears twice in the boundary walk of~$F_{G''}$. As in the proof of \ref{cycle}, $G''$ is a planar graph with two specified faces that both contain the vertex~$d$. Since $t\in A$, $G''$ has no degree $3$ vertex. Therefore $G''$ is an FT graph and has a valid orientation by Claim~\ref{mainft}. This leads to a valid orientation of $G$, a contradiction. Hence no such cut exists. 
\end{proof}

We summarise the reducible/irreducible cuts in $G$.

\begin{Prop}{PT4}\mbox{}
\begin{enumerate}[topsep=0pt]
\label{pt4}
\item The graph $G$ does not contain a $2$-robust at most $4$-edge-cut.
\item If $G$ contains a $2$-robust at most $5$-edge-cut $\delta(A)$, then it is either 
\begin{enumerate}
\item of Type 1, where $A$ is contained in an open disk and the boundary of $F_G$ is in $A$, or
\item of Type 2, where $A$ is contained in an open disk, $t\in A$, and $t$ does not have an incident edge in $\delta(A)$. 
\end{enumerate}
\item If $G$ has a $2$-robust Type 1 $6$-edge-cut $\delta(A)$ where $A$ is contained in an open disk, then the boundary of $F_G$ is in $A$. 
\end{enumerate}
\end{Prop}
\begin{proof}\mbox{}
\begin{enumerate}[topsep=0pt]
\item By definition, $G$ has no $2$-robust either at most $3$-edge-cut, or Type 1 $4$-edge-cut. Claims \ref{4type2} and \ref{type3} show that $G$ has no $2$-robust $4$-edge-cut of Type 2 or~3. 
\item This is implied by Claims \ref{type1}, \ref{type25}, \ref{type252}, and~\ref{type3}. 
\item This is given by Claim~\ref{type1}. \qedhere
\end{enumerate}
\end{proof}\medskip

Figure \ref{cuts} shows the $2$-robust at most $5$-edge-cuts that exist in~$G$. The dashed line is used to represent the crosscap. We wish to reduce at low degree vertices in~$G$. We establish the existence of $t$ and consider its adjacent vertices. 

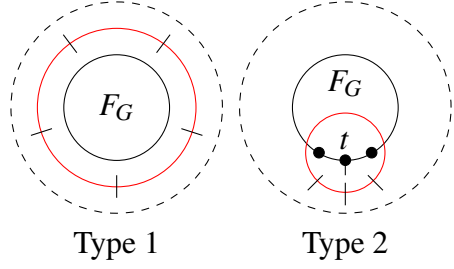
\begin{figure}
\begin{center}
\subfloat{
\begin{tikzpicture}
\draw (0,0) circle (20pt) node {$F_G$};
\draw[red] (0,0) circle (30pt);
\draw[dashed] (0,0) circle (40pt);
\draw (0,-0.9) -- (0,-1.2);
\draw (-0.86,-0.28) -- (-1.14,-0.37);
\draw (-0.53,0.73) -- (-0.71,0.97);
\draw (0.53,0.73) -- (0.71,0.97);
\draw (0.86,-0.28) -- (1.14,-0.37);
\draw (0,-1.5) circle (0pt) node[below] {Type 1};
\end{tikzpicture}
}
\subfloat{
\begin{tikzpicture}
\draw (0,0) circle (20pt) node[above] {$F_G$};
\draw[red] (0,-0.6) circle (15pt);
\draw[dashed] (0,0) circle (40pt);
\filldraw (0,-0.7) circle (2pt) node[above] {$t$};
\filldraw (0.35,-0.6) circle (2pt);
\filldraw (-0.35,-0.6) circle (2pt);
\draw (0,-0.7) -- (0,-0.9);
\draw (0,-1) -- (0,-1.3);
\draw (0.3,-0.9) -- (0.5,-1.1);
\draw (-0.3,-0.9) -- (-0.5,-1.1);
\draw (0,-1.5) circle (0pt) node[below] {Type 2};
\end{tikzpicture}
}
\caption{The $2$-robust at most $5$-edge-cuts that can exist in $G$.}
\label{cuts}
\end{center}
\end{figure}


\begin{Prop}{PT5}
The vertex $t$ exists.
\label{ptgraph}
\end{Prop}
\begin{proof}
Suppose that $t$ does not exist. We prove the following claims:
\begin{enumerate}
\item[a.] Every vertex on the boundary of $F_G$ has degree $4$. 
\item[b.] All vertices in $G$ are on the boundary of $F_G$. 
\item[c.] We have $G=B_k$, where $k\geq 5$ is odd.
\end{enumerate}
These properties provide the necessary structure to obtain a contradiction. 

\begin{Inclaim}{PT5a}
Every vertex on the boundary of $F_G$ has degree $4$. 
\end{Inclaim}
\begin{proof}
Consider the case where the boundary of $F_G$ contains a vertex $v$ of degree at least~$5$. Orient and delete a boundary edge incident with $v$, and call the resulting graph~$G'$. Then $G'$ has at most one degree $3$ vertex. Suppose that $G'$ contains a $2$-robust at most $3$-edge-cut. Then $G$ contains a $2$-robust at most $4$-edge-cut, a contradiction. Thus $G'$ is a PT graph. By the minimality of $G$, $G'$ has a valid orientation. This yields a valid orientation of~$G$, a contradiction. 
\end{proof}

\begin{Inclaim}{PT5b}
All vertices in $G$ are on the boundary of $F_G$. 
\end{Inclaim}
\begin{proof}
Suppose there exists a vertex $v$ on the boundary of $F_G$ that has an adjacent vertex $u$ not on the boundary of~$F_G$. Then $deg_G(u)\geq 5$. Let $e_1, e_2, e_3, e_4$ be the edges incident with $v$ in order, where $e_1$ and $e_4$ are on the boundary of $F_G$, and $e_2$ is incident with~$u$. Lift the pair of edges $e_3$, $e_4$, orient the remaining two edges incident with $v$ to satisfy $p(v)$, and delete $v$, calling the resulting graph~$G'$. Then $G'$ has at most one degree three vertex (the other endpoint of $e_1$). If $G'$ contains a $2$-robust at most $3$-edge-cut $\delta_{G'}(A)$, then $G$ contains a $2$-robust at most $4$-edge-cut, or a $2$-robust $5$-edge-cut that uses a boundary edge of $F_G$ and thus is of Type 2 or Type 3, a contradiction (since $t$ does not exist). Thus $G'$ is a PT graph, and so by the minimality of $G$, $G'$ has a valid orientation. This yields a valid orientation of~$G$, a contradiction.  
\end{proof}

Thus all vertices lie on the boundary of~$F_G$ and have degree $4$. Let $v_1,v_2,...,v_k$ be the vertices on the boundary of $F_G$ in order. 

\begin{Inclaim}{PT5c}
We have $G=B_k$.
\end{Inclaim}
\begin{proof}
Consider a vertex~$v_j$. It is clear from the construction that $v_j$ is adjacent to $v_{j-1}$ and~$v_{j+1}$. Let $v_a$ and $v_b$ be the remaining two vertices adjacent to~$v_j$. Suppose that $v_a$ and $v_b$ are not adjacent. Then $\delta_G(\{v_{a+1},v_{a+2},...,v_{b-1}\})$ is an at most $4$-edge-cut of Type~2. If it is not a $2$-robust $4$-edge-cut, then $G$ contains parallel edges, a contradiction. Hence $G$ contains a $2$-robust $4$-edge-cut of Type 2, a contradiction. Thus $v_a$ and $v_b$ are adjacent. \\

Without loss of generality, assume that $j-b>j-a$. Let 
$$S=\{v_a,v_{a+1},v_{a+2},...,v_{j-1}\},$$
$$T=\{v_{j+1},v_{j+2}...,v_{b-1},v_b\}.$$
If there exists an edge not on the boundary of $F_G$ with both endpoints in $S$, then either $G$ contains parallel edges, or $G$ contains a contractible chord incident with a degree $4$ vertex, a contradiction. The same is true of edges with both endpoints in~$T$. Hence every edge not in the boundary of $F_G$ and not incident with $v_j$ has one endpoint in $S$ and the other endpoint in~$T$. Since every vertex has degree $4$, $|S|=|T|$. If $k$ is even, then the non-contractible chords incident with a vertex are parallel edges. Hence $k$ is odd and $G=B_k$.
\end{proof}

By Lemma \ref{circul}, $G$ has a valid orientation. 
%
\end{proof}

Let $u$, $v$, and $w$ be the vertices adjacent to $t$, where $tu$ and $tv$ are on the boundary of~$F_G$. Since $G$ has no parallel edges, these three vertices are distinct. Note that \ref{chordspt} implies that $t$ is not incident with a contractible chord, so either $w$ is not in the boundary of $F_G$, or $tw$ is a non-contractible chord. 

\begin{claim}
Let $G'$ be a 3PT graph where $|E(G')|<|E(G)|$. Then $G'$ has a valid orientation. 
\label{3cutspt}
\end{claim}
\begin{proof}
Let $G'$ be a minimal counterexample. If $G'$ is a PT graph, then $G'$ has a valid orientation by the minimality of~$G$. Thus we may assume that $G'$ has a $2$-robust $3$-edge-cut $\delta_{G'}(A)$ where $t\in A$. By the definition of a 3PT graph, $A$ is contained in an open disk. \\

Let $G_1$ be the graph obtained from $G'$ by contracting $A$ to a vertex. Then any $3$-edge-cut in $G_1$ is a $3$-edge-cut in $G'$, so $G_1$ is a 3PT graph and has a valid orientation by the minimality of~$G'$. Transfer this orientation to~$G'$. Let $G_2$ be the graph obtained from $G'$ by contracting $G'-A$ to a vertex. Then $G_2$ is a 3DTS graph and has a valid orientation by Lemma~\ref{3dts}. This leads to a valid orientation of $G'$, a contradiction. 
\end{proof}

\begin{claim}
At least two of $u$, $v$, and $w$ have degree $4$. 
\label{two} 
\end{claim}
\begin{proof}
Suppose not. Let $G'$ be the graph obtained from $G$ by orienting and deleting~$t$. Then $G'$ contains at most one vertex of degree~$3$. Suppose that $G'$ contains a $2$-robust at most $3$-edge-cut~$\delta_{G'}(A)$. Then $G$ contains a $2$-robust at most $4$-edge-cut, a contradiction.  Hence $G'$ is a PT graph. Thus by the minimality of $G$, $G'$ has a valid orientation. This yields a valid orientation of $G$, a contradiction. \end{proof}

\begin{claim}
Vertex $w$ does not have degree $4$.
\label{w4}
\end{claim}
\begin{proof}
Suppose that $deg(w)=4$. Note that $tw$ is a non-contractible chord of~$F_G$. Let $g_1$, $g_2$, $g_t$, $g_3$ be the edges incident with $w$ in order, where $g_1$ and $g_3$ are on the boundary of $F_G$, and $g_t=wt$. We may choose the labelling of $u$ and $v$ so that the $uw$-path in the boundary of $F_G$ contains $g_1$ but not~$t$. \\

Suppose that $g_3=wv$. Then $G$ can be redrawn with $tw$ inside $F_G$ to yield an FT graph with $t$ and $w$ on both specified faces. Figure \ref{pttwo} shows this drawing. By Theorem \ref{mainft}, $G$ has a valid orientation, a contradiction. \\

We may now assume that $g_3\neq wv$. Lift the pair $g_1$, $g_2$, and orient the remaining edges incident with $w$ to satisfy~$p(w)$. Orient the remaining edges incident with $t$ to satisfy~$p(t)$. Delete $w$ and $t$, and call the resulting graph~$G'$. Then $G'$ is a planar graph. There are three possible degree three vertices in $G'$: $u$, $v$, and the vertex incident with $g_3$ (which by assumption are distinct). If $G'$ contains a $2$-robust at most $2$-edge-cut, then $G$ contains a $2$-robust at most $4$-edge-cut, a contradiction to \ref{pt4}. \\

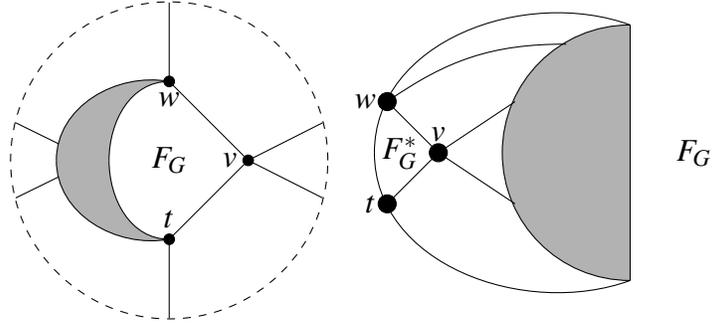
\begin{figure}
\begin{center}
\subfloat{
\begin{tikzpicture}
\fill[white!70!black] (0,1.05) to[out=180,in=90] (-0.8,0) to[out=-90,in=180] (0,-1.05) to[out=190,in=-90] (-1.5,0) to[out=90,in=170] (0,1.05);
\draw[dashed] (0,0) circle (60pt);
\filldraw (1.05,0) circle (2pt) node[left] {$v$};
\filldraw (0,-1.05) circle (2pt) node[above] {$t$};
\filldraw (0,1.05) circle (2pt) node[below] {$w$};
\filldraw (0,0) circle (0pt) node {$F_G$};
\draw (1.05,0) -- (0,1.05);
\draw (1.05,0) -- (0,-1.05);
\draw (0,1.05) to[out=180,in=90] (-0.8,0);
\draw (0,-1.05) to[out=180,in=-90] (-0.8,0);

\draw (0,1.05) to[out=170,in=90] (-1.5,0);
\draw (0,-1.05) to[out=190,in=-90] (-1.5,0);

\draw (0,-1.05) -- (0,-2.1);
\draw (0,1.05) -- (0,2.1);

\draw (1.05,0) -- (2.04,0.5);
\draw (1.05,0) -- (2.04,-0.5);

\draw (-2.04,0.5) -- (-1.47,0.22);
\draw (-2.04,-0.5) -- (-1.47,-0.22);
\end{tikzpicture}
}
\subfloat{
\begin{tikzpicture}[scale = 1.7]
\fill[white!70!black] (1,1) -- (1,-1) to[out=180,in=-90] (0,0) to[out=90,in=180] (1,1);

\draw (1,1) -- (1,-1);
\draw (1,1) to[out=180,in=90] (0,0);
\draw (1,-1) to[out=180,in=-90] (0,0);
\draw (1,1) to[out=160,in=90] (-1,0);
\draw (1,-1) to[out=-160,in=-90] (-1,0);

\filldraw (-0.9,0.4) circle (2pt) node[left] {$w$};
\filldraw (-0.9,-0.4) circle (2pt) node[left] {$t$};
\filldraw (-0.5,0) circle (2pt) node[above] {$v$};

\draw (-0.9,0.4) -- (-0.5,0);
\draw (-0.9,-0.4) -- (-0.5,0);
\draw (-0.5,0) -- (0.1,-0.4);
\draw (-0.5,0) -- (0.1,0.4);

\draw (-0.9,0.4) to[out=35,in=180] (0.5,0.85);

\draw (1.5,0) circle (0pt) node {$F_G$};
\draw (-0.8,0) circle (0pt) node {$F_G^*$};
\end{tikzpicture}
}
\caption{Claim \ref{w4}: Planar drawing of $G$.}
\label{pttwo}
\end{center}
\end{figure}
%

If all $2$-robust $3$-edge-cuts in $G'$ separate two degree $3$ vertices, then by Theorem~\ref{3cutsdts}, $G'$ has a valid orientation that leads to a valid orientation of $G$, a contradiction. Suppose that $G'$ contains a $2$-robust $3$-edge-cut $\delta_{G'}(A)$ that does not separate the degree $3$ vertices. We assume that $\delta_G(A)$ is not an at most $4$-edge-cut. Then $\delta_G(A)$ is a $5$-edge-cut using edges $g_1$ and~$g_2$. This is a Type 2 cut, for which $t$ is in the side not contained in an open disk, a contradiction. Figure \ref{ptthree} shows this cut. Orient a degree three vertex in $G'$ to obtain a DTS graph. By Theorem \ref{maindts}, $G'$ has a valid orientation. This extends to a valid orientation of $G$, a contradiction. 
\end{proof}

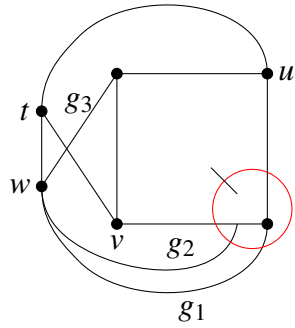
\begin{figure}
\begin{center}
\begin{tikzpicture}
\draw (0,-2) circle (0pt);

\filldraw (1,1) circle (2pt) node[right] {$u$};
\filldraw (-1,1) circle (2pt);
\filldraw (1,-1) circle (2pt);
\filldraw (-1,-1) circle (2pt) node[below] {$v$};
\draw (1,1) -- (-1,1);
\draw (1,1) -- (1,-1);
\draw (1,-1) -- (-1,-1);
\draw (-1,1) -- (-1,-1);

\filldraw (-2,-0.5) circle (2pt) node[left] {$w$};
\filldraw (-2,0.5) circle (2pt) node[left] {$t$};
\draw (-2,-0.5) -- (-2,0.5);

\draw (-2,-0.5) -- node[above=2pt, midway] {$g_3$} (-1,1);
\draw (-2,0.5) -- (-1,-1);
\draw (-2,0.5) to[out=90,in=-135] (-1.5,1.5);
\draw (1,1) to[out=90,in=45] (-1.5,1.5);
\draw (-2,-0.5) to[out=-90,in=135] (-1.5,-1.5);
\draw (1,-1) to[out=-90,in=-45] node[below, midway] {$g_1$} (-1.5,-1.5);
\draw (-2,-0.5) to[out=-90,in=155] (-1.3,-1.4);
\draw (0.6,-1) to[out=-100,in=-25] node[above, midway] {$g_2$} (-1.3,-1.4);

\draw[red] (0.8,-0.8) circle (15pt);
\draw (0.6,-0.6) -- (0.25,-0.25);
\end{tikzpicture}
\caption{Claim \ref{w4}: $2$-robust Type 2 $5$-edge-cut.}
\label{ptthree}
\end{center}
\end{figure}

\begin{Prop}{PT6}
Vertices $u$ and $v$ have degree $4$. 
\label{uorv}
\end{Prop}
\begin{proof}
This is an immediate corollary of Claims \ref{two} and \ref{w4}.
 \end{proof}



\begin{Prop}{PT7}
Vertex $w$ has degree $5$.
\label{w5}
\end{Prop}
\begin{proof}
Suppose that $deg(w)\geq 6$. Let $e_1$, $e_2$, $e_3$, $e_t$ be the edges incident with $u$ in order, where $e_1$ is on the boundary of $F_G$, and $e_t=ut$. We prove the following claims
\begin{enumerate}
\item[a.] Edge $e_3$ is incident with two vertices of degree $4$. 
\item[b.] Edge $tw$ is not a chord. 
\end{enumerate}
These provide the necessary structure to complete the proof.

\begin{Inclaim}{PT7a}
Edge $e_3$ is incident with two vertices of degree $4$. 
\end{Inclaim}
\begin{proof}
Suppose that $e_3$ is not incident with two vertices of degree~$4$. Lift the pair $e_1$, $e_2$, and orient the remaining edges incident with~$u$. Orient the remaining edges incident with $t$, and delete $u$ and $t$, calling the resulting graph~$G'$. Then $G'$ contains at most one vertex of degree $3$, which is~$v$. \\

We now check that $G'$ has no $2$-robust $2$- or $3$-edge-cuts. The cases are indicated in italics. We follow a similar process in most future cases. 
\emph{Suppose that $G'$ contains a $2$-robust at most $2$-edge-cut~$\delta_{G'}(A)$.} Then $G$ contains a $2$-robust at most $4$-edge-cut, a contradiction. \emph{Suppose that $G'$ contains a $2$-robust $3$-edge-cut~$\delta_{G'}(A)$.} Then $G$ contains a corresponding $2$-robust cut $\delta(C)$ of size at most~$5$. There are two options. First, $\delta_G(C)$ is of Type 2 and has $t$ on the side that is contained in an open disk. We assume that this side is~$C$. By Claim \ref{3cutspt}, $v\not\in C$. Hence $tv\in\delta_G(C)$. We have $u,w\in C$, else a smaller cut exists. We conclude that $e_3\in\delta_G(C)$. Let $B$ be the maximal connected subgraph of $C$ containing $w$ but not~$t$. Then $\delta_G(B)$ is an at most $4$-edge-cut of Type 1, a contradiction. This cut can be seen in Figure~\ref{ptfour}. Second, $\delta_G(C)$ is of Type 1 in~$G$. Then $\delta_G(C)$ has $v$ on the side contained in a disk in $G'$, and thus $G'$ has a valid orientation by Claim~\ref{3cutspt}. This leads to a valid orientation of $G$, a contradiction.\\ 

Hence $G'$ is a PT graph, and thus has a valid orientation by the minimality of~$G$. This extends to a valid orientation of $G$, a contradiction. 
\end{proof}

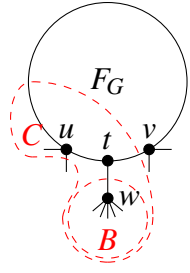
\begin{figure}
\begin{center}
\begin{tikzpicture}
\draw (0,0) circle (30pt) node {$F_G$};
\filldraw (0,-1.05) circle (2pt) node[above] {$t$};
\filldraw (0.55,-0.9) circle (2pt) node[above] {$v$};
\filldraw (-0.55,-0.9) circle (2pt) node[above] {$u$};

\draw (-0.55,-0.9) -- (-0.55,-1.2);
\draw (-0.55,-0.9) -- (-0.85,-0.9);
\draw (0.55,-0.9) -- (0.55,-1.2);
\draw (0.55,-0.9) -- (0.85,-0.9);

\filldraw (0,-1.55) circle (2pt) node[right] {$w$};
\draw (0,-1.05) -- (0,-1.55);
\draw (0,-1.55) -- (0,-1.82);
\draw (0,-1.55) -- (0.1,-1.8);
\draw (0,-1.55) -- (-0.1,-1.8);
\draw (0,-1.55) -- (0.2,-1.72);
\draw (0,-1.55) -- (-0.2,-1.72);

\draw[red,dashed] (0,-1.82) circle (15pt) node[below] {$B$};

\draw[red,dashed] (-1,0) to[out=0,in=90] (0.6,-1.8);
\draw[red,dashed] (0.6,-1.8) to[out=-90,in=0] (0,-2.4);
\draw[red,dashed] (0,-2.4) to[out=180,in=-90] (-0.6,-1.8);
\draw[red,dashed] (-0.6,-1.8) to[out=80,in=-80] (-0.4,-1.2);
\draw[red,dashed] (-0.4,-1.2) to[out=100,in=0] (-1,-1) node[above] {$C$};
\draw[red,dashed] (-1,-1) to[out=180,in=180] (-1,0);
\end{tikzpicture}
\caption{\ref{w5}: $2$-robust Type 1 $4$-edge-cut.}
\label{ptfour}
\end{center}
\end{figure}

%

Since $e_3$ is incident with two vertices of degree~$4$, $e_3$ is a non-contractible chord. Similarly, $v$ must have an analogous incident chord. Neither chord is incident with $w$, since $w$ has degree at least $6$. Let $x$ and $y$ be the vertices adjacent to $u$ and $v$ respectively via a chord. \\

\begin{Inclaim}{PT7b}
Edge $tw$ is not a chord. 
\end{Inclaim}
\begin{proof}
Suppose that $tw$ is a chord. Let $B$ be the set of vertices in the interior of the closed disc bounded by $wt$, $tu$, $ux$, and the $wx$-subpath of $F_G-t$. If $B$ is empty, consider the same set with respect to $v$ and~$y$. Now $\delta_G(B)$ contains at most $2$ edges incident with~$x$. Hence it contains at least two edges incident with $w$, since $\delta(t)$ is the only $3$-edge-cut in~$G$. Since $G$ has no parallel edges, $\delta_G(B)$ is $2$-robust. This cut can be seen in Figure~\ref{ptfive}. Let $C$ be the minimal $2$-robust edge-cut where $C\subseteq B$ and $\delta_G(C)$ contains at most three edges not incident with~$w$. \\

Contract $C$ to a vertex, calling the resulting graph~$G'$. Then $G'$ is a PT graph and has a valid orientation by the minimality of~$G$. Transfer this orientation to $G$ and contract $G-C$, calling the resulting graph~$G'$. Delete edges incident with $w$ to make the vertex of contraction a degree $4$ vertex. Since $G$ has no parallel edges, at most one degree $3$ vertex results from this process. If $G''$ has a $2$-robust at most $3$-edge-cut, then $C$ was not minimal, a contradiction. 
\end{proof}

\begin{figure}
\begin{center}
\begin{tikzpicture}
\draw (0,0) circle (30pt) node {$F_G$};
\filldraw (0,-1.05) circle (2pt) node[above] {$t$};
\filldraw (0.55,-0.9) circle (2pt) node[above] {$v$};
\filldraw (-0.55,-0.9) circle (2pt) node[above] {$u$};
\filldraw (0,1.05) circle (2pt) node[below] {$w$};
\filldraw (0.55,0.9) circle (2pt) node[below] {$x$};
\filldraw (-0.55,0.9) circle (2pt) node[below] {$y$};

\draw[dashed] (0,0) circle (60pt);
\draw (0,-1.05) -- (0,-2.1);
\draw (0,1.05) -- (0,2.1);
\draw (-0.55,-0.9) -- (-1.1,-1.79);
\draw (0.55,0.9) -- (1.1,1.79);
\draw (0.55,-0.9) -- (1.1,-1.79);
\draw (-0.55,0.9) -- (-1.1,1.79);
\draw (-0.55,-0.9) -- (-0.85,-0.9);
\draw (0.55,-0.9) -- (0.85,-0.9);

\draw (0.55,0.9) -- (0.4,1.12);
\draw (0,1.05) -- (0.15,1.29);
\draw (0,1.05) -- (0.2,1.25);
\draw (0,1.05) -- (0.25,1.2);
\draw (0,1.05) -- (0.28,1.15);

\draw[red, dashed] (0.3,1.2) circle (6pt);
\end{tikzpicture}
\caption{\ref{w5}: Small Type 1 or 2 cut.}
\label{ptfive}
\end{center}
\end{figure}
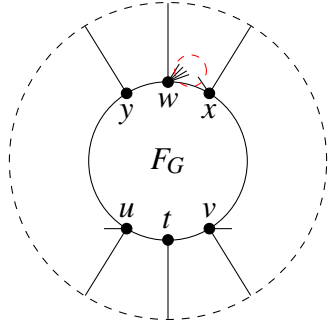


Since $tw$ is not a chord, $G$ contains a $2$-robust cut of size at most $5$, $\delta_G(A)$, using at most two edges incident with each $x$ and $y$, and the edge~$tw$. This cut can be seen in Figure~\ref{ptsix}. Then $t\in G-A$, and the graph obtained from contracting $G-A$ to a single vertex is planar. By Claim \ref{type25}, $G$ has a valid orientation. 
\end{proof}

\begin{figure}
\begin{center}
\begin{tikzpicture}
\draw (0,0) circle (30pt) node {$F_G$};
\filldraw (0,-1.05) circle (2pt) node[above] {$t$};
\filldraw (0.55,-0.9) circle (2pt) node[above] {$v$};
\filldraw (-0.55,-0.9) circle (2pt) node[above] {$u$};
\filldraw (0.55,0.9) circle (2pt) node[below] {$x$};
\filldraw (-0.55,0.9) circle (2pt) node[below] {$y$};

\draw[dashed] (0,0) circle (60pt);
\draw (-0.55,-0.9) -- (-1.1,-1.79);
\draw (0.55,0.9) -- (1.1,1.79);
\draw (0.55,-0.9) -- (1.1,-1.79);
\draw (-0.55,0.9) -- (-1.1,1.79);
\draw (-0.55,-0.9) -- (-0.85,-0.9);
\draw (0.55,-0.9) -- (0.85,-0.9);

\filldraw (0,-1.55) circle (2pt) node[right] {$w$};
\draw (0,-1.05) -- (0,-1.55);
\draw (0,-1.55) -- (0,-1.82);
\draw (0,-1.55) -- (0.1,-1.8);
\draw (0,-1.55) -- (-0.1,-1.8);
\draw (0,-1.55) -- (0.2,-1.72);
\draw (0,-1.55) -- (-0.2,-1.72);

\draw (0.55,0.9) -- (0.55,1.2);
\draw (-0.55,0.9) -- (-0.55,1.2);

\draw[red,dashed] (0,0.8) to[out=0,in=-115] (1,1.85);
\draw[red,dashed] (0,0.8) to[out=180,in=-65] (-1,1.85);
\draw[red,dashed] (0,-1.3) to[out=0,in=115] (1,-1.85);
\draw[red,dashed] (0,-1.3) to[out=180,in=65] (-1,-1.85);
\end{tikzpicture}
\caption{\ref{w5}: Small Type 2 cut.}
\label{ptsix}
\end{center}
\end{figure}
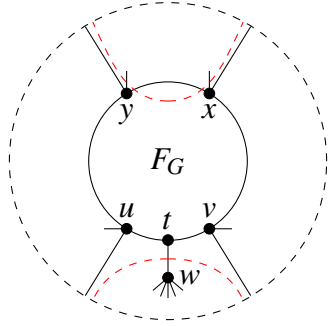


Therefore, $u$ and $v$ have degree $4$, and $w$ has degree~$5$. 

\begin{Prop}{PT8}
The edge $tw$ is a chord.
\label{twchord}
\end{Prop}
\begin{proof}
Suppose that $tw$ is not a chord. Let $e_1,e_2,e_3,e_t$ be the edges incident with $u$ in order, where $e_t=ut$, and $e_1$ is on the boundary of~$F_G$. We first prove the following property.

\begin{Inclaim}{PT8a}
Either $e_3$ is incident with two degree four vertices or $e_3$ is incident with~$w$.
\end{Inclaim}
\begin{proof}
Suppose that $e_3$ is not incident with two degree four vertices, and is not incident with~$w$. Lift the pair $e_1$, $e_2$, orient the remaining edges incident with $u$ to satisfy $p(u)$, and orient the remaining edges incident with $t$ to satisfy~$p(t)$. Delete $u$ and $t$, calling the resulting graph~$G'$. Then in $G'$, the only possible degree three vertex is~$v$. The analysis that $G'$ has no small cuts is equivalent to that in \ref{w5}. Hence $G'$ is a PT graph, and thus has a valid orientation by the minimality of~$G$. This extends to a valid orientation of $G$, a contradiction. 
\end{proof}



%

Therefore either $e_3$ is incident with two degree four vertices or $e_3$ is incident with~$w$. The same must be true of the corresponding edge incident with~$v$. There are three cases. 

\begin{enumerate}
\item First, suppose that both $u$ and $v$ are adjacent to~$w$. \emph{Suppose that $e_2$ is not incident with two degree four vertices.} Then orient and delete $e_1$ and $e_2$ to satisfy $p(u)$, and contract the set of vertices $\{u,t,v,w\}$ to a single degree four vertex, calling the resulting graph~$G'$. Then $G'$ has at most one degree three vertex, which is incident with $e_1$ in~$G$. If $G'$ contains a $2$-robust edge-cut of size at most $3$, then $G$ contains a $2$-robust edge-cut of size at most $5$ that contains a boundary edge incident with $t$, a contradiction. Hence $G'$ is a PT graph. By the minimality of $G$, $G'$ has a valid orientation. Transfer this orientation to~$G$. Orient the remaining two edges incident with $v$ to satisfy $p(v)$, and the remaining two edges incident with $t$ to satisfy~$p(t)$. Since $p(u)$ is satisfied by the orientations of $e_1$ and $e_2$, the direction of $e_3$ is determined to be the opposite (relative to $u$) of the direction of~$e_t$. Since $w$ cannot be the only vertex whose prescription is not met, this is a valid orientation of $G$, a contradiction. \\

\emph{We now assume that $e_2$ is incident with two degree four vertices.} The same must be true of the corresponding edge incident with~$v$. Let these vertices be $x$ and $y$ respectively. Suppose that $x$ and $y$ are not adjacent. Let $C_1$ and $C_2$ be the components of $G-\{x,y,w\}$, where the labelling is chosen so that $t\in C_2$. Consider the cut $\delta_G(C_1)$. If $C_1$ contains a single vertex, then $G$ has parallel edges, a contradiction. Hence $\delta_G(C_1)$ is $2$-robust. Note that $\delta_G(C_1)$ is a Type 2 cut and has size at most~$6$. This cut is shown in Figure~\ref{ptseven}.\\

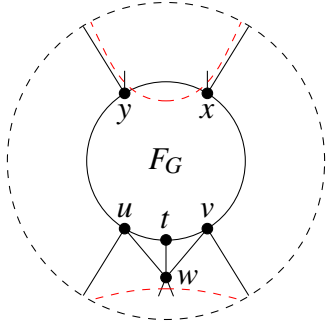
\begin{figure}
\begin{center}
\begin{tikzpicture}
\draw (0,0) circle (30pt) node {$F_G$};
\filldraw (0,-1.05) circle (2pt) node[above] {$t$};
\filldraw (0.55,-0.9) circle (2pt) node[above] {$v$};
\filldraw (-0.55,-0.9) circle (2pt) node[above] {$u$};
\filldraw (0.55,0.9) circle (2pt) node[below] {$x$};
\filldraw (-0.55,0.9) circle (2pt) node[below] {$y$};

\draw[dashed] (0,0) circle (60pt);
\draw (-0.55,-0.9) -- (-1.1,-1.79);
\draw (0.55,0.9) -- (1.1,1.79);
\draw (0.55,-0.9) -- (1.1,-1.79);
\draw (-0.55,0.9) -- (-1.1,1.79);

\filldraw (0,-1.55) circle (2pt) node[right] {$w$};
\draw (0,-1.05) -- (0,-1.55);
\draw (0,-1.55) -- (0.1,-1.8);
\draw (0,-1.55) -- (-0.1,-1.8);
\draw (-0.55,-0.9) -- (0,-1.55);
\draw (0.55,-0.9) -- (0,-1.55);

\draw (0.55,0.9) -- (0.55,1.2);
\draw (-0.55,0.9) -- (-0.55,1.2);

\draw[red,dashed] (0,0.8) to[out=0,in=-115] (1,1.85);
\draw[red,dashed] (0,0.8) to[out=180,in=-65] (-1,1.85);
\draw[red,dashed] (0,-1.7) to[out=0,in=165] (1,-1.85);
\draw[red,dashed] (0,-1.7) to[out=180,in=15] (-1,-1.85);
\end{tikzpicture}
\caption{\ref{twchord}: Small Type 2 cut (1).}
\label{ptseven}
\end{center}
\end{figure}


Contract $C_1$ to a vertex, calling the resulting graph $G'$. It is clear that $G'$ is a PT graph. Thus $G'$ has a valid orientation by the minimality of~$G$. Transfer this orientation to $G$ and contract $G-C_1$ calling the resulting graph~$G''$. Note that $G''$ can be embedded in the plane with a directed vertex $d'$ of degree at most~$6$. Delete the edges in the cut that are incident with $x$, calling the resulting graph~$\bar{G}$. Then $\bar{G}$ is planar, has at most one vertex of degree~$3$, and has a directed vertex of degree at most~$4$. If $\bar{G}$ has a $2$-robust at most $3$-edge-cut, then $G$ has a $2$-robust at most $5$-edge-cut, which is of Type 1, or Type 2 with $t$ in the side not in an open disk, a contradiction. Hence $\bar{G}$ is a DTS graph and has a valid orientation by Theorem~\ref{maindts}. This leads to a valid orientation of $G$, a contradiction. \\

Hence we may assume that $x$ and $y$ are adjacent. Note that $G-\{t,u,v,w,x,y\}$ has two components: $A$ with neighbours in $G$ among $x$, $y$, and $w$, and $B$, with neighbours in $G$ among $u$, $v$, $x$, and $y$. If $\delta_G(A)$ is a 2-robust cut, then it has size at most $4$ and is of Type 1, a contradiction. If $A$ contains a single vertex, then $G$ has parallel edges incident with $w$, a contradiction. Hence $x$ and $y$ are adjacent to~$w$. This graph is shown in Figure~\ref{pteight}. \\

\begin{figure}
\begin{center}
\begin{tikzpicture}
\draw (0,0) circle (30pt) node {$F_G$};
\filldraw (0,-1.05) circle (2pt) node[above] {$t$};
\filldraw (0.55,-0.9) circle (2pt) node[above] {$v$};
\filldraw (-0.55,-0.9) circle (2pt) node[above] {$u$};
\filldraw (0.55,0.9) circle (2pt) node[below] {$x$};
\filldraw (-0.55,0.9) circle (2pt) node[below] {$y$};

\draw[dashed] (0,0) circle (60pt);
\draw (-0.55,-0.9) -- (-1.1,-1.79);
\draw (0.55,0.9) -- (1.1,1.79);
\draw (0.55,-0.9) -- (1.1,-1.79);
\draw (-0.55,0.9) -- (-1.1,1.79);

\filldraw (0,-1.55) circle (2pt) node[right] {$w$};
\draw (0,-1.05) -- (0,-1.55);
\draw (-0.55,-0.9) -- (0,-1.55);
\draw (0.55,-0.9) -- (0,-1.55);

\draw (0.55,0.9) -- (0.3,2.08);
\draw (-0.55,0.9) -- (-0.3,2.08);
\draw (0,-1.55) -- (0.3,-2.08);
\draw (0,-1.55) -- (-0.3,-2.08);
\end{tikzpicture}
\caption{\ref{twchord}: $x$ and $y$ adjacent to $w$}
\label{pteight}
\end{center}
\end{figure}
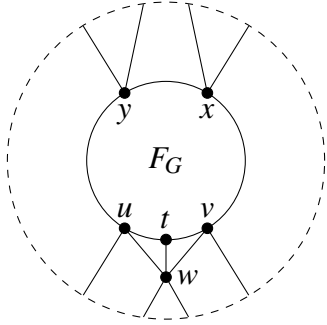


Let $A=\{t,u,v,w,x,y\}$. If $\delta_G(A)$ is a 2-robust cut, then it has size $4$ and is of Type 3, a contradiction. If $|V(G-A)|=1$, then this vertex is repeated in the boundary walk of $F_G$, so the boundary of $F_G$ is not a cycle, a contradiction. Hence $|V(G-A)|=0$. Lift the pair of edges $tv$, $vw$ and orient $v$, $x$, $y$, $u$, $t$ in order (each having at least two unoriented edges). This determines the direction of $vw$, and since $w$ cannot be the only vertex to not meet its prescription, the result is a valid orientation of $G$, a contradiction.

\item Now suppose that $e_3$ and the corresponding edge incident with $v$ each have two endpoints of degree $4$, $x$ and $y$ respectively. Then $x$ and $y$ are on the boundary of~$F_G$. Then $G$ contains a $2$-robust edge-cut $\delta_G(A)$ of size at most $5$ (containing $tw$, one or two edges incident with $x$, and one or two edges incident with~$y$). If the labelling is chosen so that $t\in G-A$, then the graph obtained by contracting $G-A$ to a single vertex is planar, so by Claim \ref{type25}, $G$ has a valid orientation. 

\item Finally, suppose without loss of generality that $e_3$ has two endpoints of degree $4$, and $v$ is adjacent to~$w$. Let $x$ be the other endpoint of~$e_3$. Let $f_1$ and $f_2$ be the edges incident with $v$ that are not $vt$ or $vw$, where $f_1$ is on the boundary of~$F_G$. \emph{Assume that $f_2$ does not have two incident vertices of degree~$4$.} Orient and delete $f_1$ and $f_2$ to satisfy $p(v)$, and contract the set of vertices $\{v,t,w\}$ to a single vertex of degree $4$, calling the resulting graph~$G'$. If $G'$ contains a $2$-robust at most $3$-edge-cut, then $G$ contains a $2$-robust at most $5$-edge-cut containing a boundary edge incident with $t$, a contradiction. Then $G'$ is a PT graph, so by the minimality of $G$, $G'$ has a valid orientation. Transfer this orientation to~$G$. Orient the remaining two edges incident with $t$ to satisfy~$p(t)$. Since $p(v)$ is satisfied by the orientations of $f_1$ and $f_2$, the direction of $vw$ is determined to be the opposite (relative to $v$) of the direction of~$vt$. Since $w$ cannot be the only vertex whose prescription is not met, this is a valid orientation of $G$, a contradiction. \\

\emph{We now assume that $f_2$ has two incident vertices of degree~$4$.} Then $f_2$ is a chord. Let $y$ be the other endpoint of~$f_2$. Consider the cut $\delta_G(A)$ where $x,y,t,v\in A$, $w\in G-A$, and $G-A$ is connected and maximised. If $G-A$ contains only one vertex, then $G$ has parallel edges, a contradiction. Hence $\delta_G(A)$ is $2$-robust. Note that $\delta_G(A)$ is a Type 2 cut and has size at most~$6$. This cut is shown in Figure~\ref{ptnine}.\\

\begin{figure}
\begin{center}
\begin{tikzpicture}
\draw (0,0) circle (30pt) node {$F_G$};
\filldraw (0,-1.05) circle (2pt) node[above] {$t$};
\filldraw (0.55,-0.9) circle (2pt) node[above] {$v$};
\filldraw (-0.55,-0.9) circle (2pt) node[above] {$u$};
\filldraw (0.55,0.9) circle (2pt) node[below] {$x$};
\filldraw (-0.55,0.9) circle (2pt) node[below] {$y$};

\draw[dashed] (0,0) circle (60pt);
\draw (-0.55,-0.9) -- (-1.1,-1.79);
\draw (0.55,0.9) -- (1.1,1.79);
\draw (0.55,-0.9) -- (1.1,-1.79);
\draw (-0.55,0.9) -- (-1.1,1.79);

\filldraw (0,-1.55) circle (2pt) node[right] {$w$};
\draw (0,-1.05) -- (0,-1.55);
\draw (-0.55,-0.9) -- (-0.85,-0.9);
\draw (0.55,-0.9) -- (0,-1.55);

\draw (0.55,0.9) -- (0.55,1.2);
\draw (-0.55,0.9) -- (-0.55,1.2);
\draw (0,-1.55) -- (0,-1.85);
\draw (0,-1.55) -- (0.3,-1.8);
\draw (0,-1.55) -- (-0.3,-1.8);

\draw[red,dashed] (0,0.8) to[out=0,in=-115] (1,1.85);
\draw[red,dashed] (0,0.8) to[out=180,in=-65] (-1,1.85);
\draw[red,dashed] (0,-1.3) to[out=0,in=115] (1,-1.85);
\draw[red,dashed] (0,-1.3) to[out=180,in=65] (-1,-1.85);
\end{tikzpicture}
\caption{\ref{twchord}: Small Type 2 cut (2).}
\label{ptnine}
\end{center}
\end{figure}
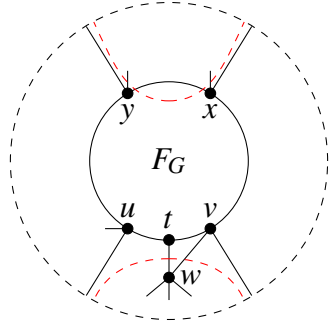


Contract $G-A$ to a vertex, calling the resulting graph~$G'$. It is clear that $G'$ is a PT graph. Thus $G'$ has a valid orientation by the minimality of~$G$. Transfer this orientation to $G$ and contract $A$ calling the resulting graph~$G''$. Note that $G''$ is planar and has a directed vertex $d'$ of degree at most~$6$. Delete two consecutive edges incident with $d'$ where one is a boundary edge, calling the resulting graph~$\bar{G}$. Then $\bar{G}$ is planar and has at most one vertex of degree~$3$. If $\bar{G}$ has a $2$-robust at most $3$-edge-cut, then $G$ has a $2$-robust at most $5$-edge-cut, either of Type 1, or Type 2 where $t$ is on the side not contained in an open disk, a contradiction. Hence $\bar{G}$ is a DTS graph and has a valid orientation by Theorem~\ref{maindts}. This leads to a valid orientation of $G$, a contradiction. \qedhere
\end{enumerate}
\end{proof}\medskip
%
%

We are left with the case where $tw$ is a chord. Let the edges incident with $u$ be $e_1,e_2,e_3,e_t$ in order, where $e_t=ut$, and $e_1$ is a boundary edge of~$F_G$. Suppose that $e_3$ is not incident with two degree four vertices, and is not incident with~$w$. Lift the pair $e_1$, $e_2$, orient the remaining edges incident with $u$ to satisfy $p(u)$, and orient the remaining edges incident with $t$ to satisfy~$p(t)$. Delete $u$ and $t$, calling the resulting graph~$G'$. Then in $G'$, the only degree three vertex is~$v$. The argument that $G'$ contains no small cuts is equivalent to previous arguments. Hence $G'$ is a PT graph. By the minimality of $G$, $G'$ has a valid orientation. This extends to a valid orientation of $G$, a contradiction. \\

Therefore $e_3$ is incident with either two degree four vertices or with~$w$. The same must be true of the corresponding edge incident with~$v$. 

\begin{Prop}{PT9}
Both $u$ and $v$ are adjacent to $w$. 
\end{Prop}
\begin{proof}
Consider the case where $e_3$ is incident with two degree four vertices. Then $e_3$ is a chord. Let $y$ be the other endpoint of~$e_3$. Then $y$ is adjacent to $w$ on the boundary of $F_G$, else $G$ has a $2$-robust at most $5$-edge-cut of Type 2, where $t$ is not in the side contained in a disk, a contradiction. This cut is shown in Figure~\ref{ptten}. If the corresponding edge incident with $v$ is a chord, the same is true. Then $G$ has a $2$-robust at most $3$-edge-cut, a contradiction. This cut can also be seen in Figure~\ref{ptten}. Thus we may assume that $v$ and $w$ are adjacent.\\

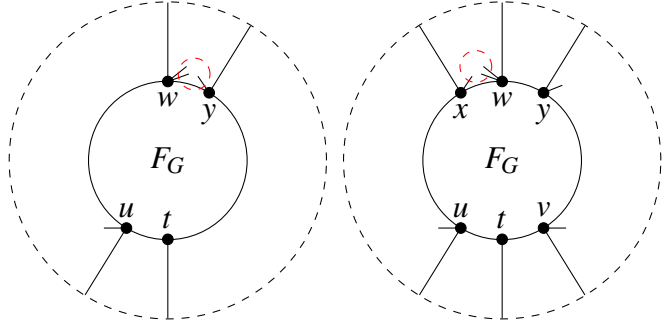
\begin{figure}
\begin{center}
\subfloat{
\begin{tikzpicture}
\draw (0,0) circle (30pt) node {$F_G$};
\filldraw (0,-1.05) circle (2pt) node[above] {$t$};
\filldraw (-0.55,-0.9) circle (2pt) node[above] {$u$};
\filldraw (0,1.05) circle (2pt) node[below] {$w$};
\filldraw (0.55,0.9) circle (2pt) node[below] {$y$};

\draw[dashed] (0,0) circle (60pt);
\draw (0,-1.05) -- (0,-2.1);
\draw (0,1.05) -- (0,2.1);
\draw (-0.55,-0.9) -- (-1.1,-1.79);
\draw (0.55,0.9) -- (1.1,1.79);
\draw (-0.55,-0.9) -- (-0.85,-0.9);

\draw (0.55,0.9) -- (0.4,1.12);
\draw (0,1.05) -- (0.25,1.25);
\draw (0,1.05) -- (0.28,1.15);

\draw[red, dashed] (0.35,1.15) circle (6pt);
\end{tikzpicture}
}
\subfloat{
\begin{tikzpicture}
\draw (0,0) circle (30pt) node {$F_G$};
\filldraw (0,-1.05) circle (2pt) node[above] {$t$};
\filldraw (0.55,-0.9) circle (2pt) node[above] {$v$};
\filldraw (-0.55,-0.9) circle (2pt) node[above] {$u$};
\filldraw (0,1.05) circle (2pt) node[below] {$w$};
\filldraw (0.55,0.9) circle (2pt) node[below] {$y$};
\filldraw (-0.55,0.9) circle (2pt) node[below] {$x$};

\draw[dashed] (0,0) circle (60pt);
\draw (0,-1.05) -- (0,-2.1);
\draw (0,1.05) -- (0,2.1);
\draw (-0.55,-0.9) -- (-1.1,-1.79);
\draw (0.55,0.9) -- (1.1,1.79);
\draw (0.55,-0.9) -- (1.1,-1.79);
\draw (-0.55,0.9) -- (-1.1,1.79);
\draw (-0.55,-0.9) -- (-0.85,-0.9);
\draw (0.55,-0.9) -- (0.85,-0.9);

\draw (0.55,0.9) -- (0.8,1);
\draw (-0.55,0.9) -- (-0.4,1.12);
\draw (0,1.05) -- (-0.25,1.25);
\draw (0,1.05) -- (-0.28,1.15);

\draw[red, dashed] (-0.35,1.25) circle (6pt);
\end{tikzpicture}
}
\caption{Edge $tw$ is a chord: Small Type 2 cuts.}
\label{ptten}
\end{center}
\end{figure}


Let $f_1$, $f_2$, $f_w$, $f_t$ be the edges incident with $v$ in order, where $f_t=vt$, $f_w=vw$, and $f_1$ is a boundary edge of~$F_G$. Now $f_2$ is not incident with two vertices of degree $4$, else $G$ has either a $2$-robust at most $4$-edge-cut, or parallel edges. Also, $v$ cannot be adjacent to $w$ via parallel edges. Orient and delete $f_1$ and $f_2$ to satisfy $p(v)$, and contract the set of vertices $\{t,w,v\}$ to a single vertex of degree $4$, calling the resulting graph~$G'$. Now $G'$ has only one possible degree three vertex, in $G$ it is incident with~$f_1$. If $G'$ contains a $2$-robust edge-cut of size at most $3$, then $G$ contains a $2$-robust edge-cut of size at most $5$ containing a boundary edge incident with $t$, a contradiction. Thus $G'$ is a PT graph, and has a valid orientation by the minimality of~$G$. Transfer this orientation to~$G$. Orient the remaining edges incident with~$t$. Since $f_1$ and $f_2$ satisfy $p(v)$, $f_w$ is known to have the opposite direction (relative to $v$) from~$f_t$. Since $w$ cannot be the only vertex whose prescription is not met, this is a valid orientation of~$G$. 
\end{proof}

The remaining case is that both $u$ and $v$ are adjacent to~$w$. Let $P_1=tu_1u_2...u_iw$ be the path on the boundary of $F_G$ from $t$ to $w$ that includes $u$, and let $P_2=tv_1v_2...v_jw$ be the path on the boundary of $F_G$ from $t$ to $w$ that includes~$v$. Let $S_1$ and $S_2$ be the subsets of vertices in $P_1$ and $P_2$ respectively that have an adjacent vertex of degree at least $5$ that is not~$w$. \\

If $S_1 \neq\emptyset$, then let $k$ be such that $u_k \in S_1$ and $\min \{k, i - k + 1.5\}$ is minimised.  If $S_2 \neq\emptyset$, then let $\ell$ be such that $v_\ell \in S_2$ and $\min \{\ell, j - \ell + 1.5\}$ is minimised. Without loss of generality, suppose that $\min \{k, i - k + 1.5\}\leq \min \{\ell, j - \ell + 1.5\}$. Let $e_1,e_2,e_3,e_4$ be the edges incident with $u_k$ in order, where if $k> \frac{i+1}{2}$, $e_1=u_{k-1}u_k$ and $e_4=u_ku_{k+1}$ (with the convention that $t=u_0$ and $w=u_{i+1}$ if necessary), and otherwise, $e_4=u_{k-1}u_k$ and $e_1=u_ku_{k+1}$. At least one of $e_1$ and $e_2$ is incident with a degree $5$ vertex, that is not $w$, by definition. \\

If, for example, $k= 3$, then $u_1 = u$ is adjacent to $w$ and $v_j$, $v_j$ is adjacent to $u_2$, $u_2$ is adjacent to $v_{j-1}$ and $v_{j-1}$ is adjacent to~$u_3$. Likewise $v_1=v$ is adjacent to $w$ and $u_i$, $u_i$ to $v_2$, $v_2$ to $u_{i-1}$, and $u_{i-1}$ to~$v_3$. More generally, the set $X$ defined next, consists of those vertices whose adjacencies are determined in this fashion.\\

If $k>\frac{i+1}{2}$, let $X=\{u_m:m\geq k\text{ or }m\leq i-k+1\}\cup\{v_m:m\geq j-i+k\text{ or }m\leq i-k+1\}$. Otherwise, let $X=\{u_m:m\leq k\text{ or }m> i-k+1\}\cup\{v_m:m\leq k\text{ or }m> j-k+1\}$. Orient and delete $e_1$ and $e_2$ to satisfy $p(u_k)$, and contract the set of vertices $X\cup\{t,w\}$ to a single vertex of degree~$4$. Call the resulting graph~$G'$. By definition, $G'$ has at most one degree three vertex. If $G'$ has a $2$-robust at most $3$-edge-cut, then $G$ has a corresponding $2$-robust at most $5$-edge-cut that does not separate $t$ from $w$, a contradiction. Hence $G'$ is a PT graph. By the minimality of $G$, $G'$ has a valid orientation. Transfer this orientation to~$G$.\\

Suppose that $k>\frac{i+1}{2}$. Orient and delete the two remaining unoriented edges incident with the following vertices in order:
$$v_{j-i+k},u_{i-k+1},v_{j-i+k+1},u_{i-k},...,v_j,u_1,t,w,v_1,u_i,v_2,u_{i-1},...,v_{i-k},u_{k+1}.$$
There is only one unoriented edge at $u_k$ ($u_kv_{i-k+1}$), which by construction must have the opposite direction (relative to $u_k$) from $u_ku_{k+1}$. Since $v_{i-k+1}$ cannot be the only vertex whose prescription is not met, this is a valid orientation for~$G$. \\

Suppose that $k\leq\frac{i+1}{2}$. Orient and delete the two remaining unoriented edges incident with the following vertices in order:
$$v_k,u_{i-k+2},v_{k-1},u_{i-k+3},...,u_i,v_1,t,w,w_1,v_j,u_2,v_{j-1},...,v_{j-k+3},u_{k-1}.$$
There is only one unoriented edge at $u_k$ ($u_kv_{j-k+2}$), which by construction must have the opposite direction (relative to $u_k$) from $u_ku_{k+1}$. Since $v_{j-k+2}$ cannot be the only vertex whose prescription is not met, this is a valid orientation for~$G$. \\

Now suppose that $S_1,S_2=\emptyset$. Then every vertex in $G$ is on the boundary of $F_G$, and aside from $t$ and $w$, all vertices have degree~$4$. Note that $P_1$ and $P_2$ have the same length, so $j=i$, and the edges in the graph that are not on the boundary of $F_G$ are
$$\{tw, u_1w, v_1w, u_kv_{i-k+1}\text{ where $1\leq k\leq i$},u_{\ell}v_{i-\ell}\text{ where $2\leq \ell\leq i$}\}$$
(See \ref{ptgraph}). Then $G=A_{i+1}$, where $i\geq 3$ is odd. Lemma \ref{circul} yields a valid orientation for $G$. 
\end{proof}
%
%

\section{Discussion}

\label{projdis}

The following result (the Strong $3$-Flow Conjecture for projective planar graphs) is a direct corollary of Theorem \ref{pmain}.

\begin{theorem}
\label{main1}
Let $G$ be a $5$-edge-connected graph embedded in the projective plane. Then $G$ is $\mathbb{Z}_3$-connected.
\end{theorem}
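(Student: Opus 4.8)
The plan is to derive Theorem~\ref{main1} from Theorem~\ref{pmain} by checking that every $5$-edge-connected graph $G$ embedded in the projective plane gives rise to a PT graph once we fix a prescription function, so that applying Theorem~\ref{pmain} produces the required valid orientation, hence a nowhere-zero $\mathbb{Z}_3$-flow meeting the prescription. First I would fix an arbitrary valid prescription function $p:V(G)\to\{-1,0,1\}$ (one summing to zero in $\mathbb{Z}_3$), and we must show $G$ has a valid $\mathbb{Z}_3$-flow achieving $p$. If $|V(G)|\le 2$ the graph is a PT graph by the explicit convention in the definition, and Theorem~\ref{pmain} applies directly; so assume $G$ has at least three vertices.

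Next I would verify the five conditions in the definition of a PT graph. Condition~(1), $3$-edge-connectedness, is immediate since $5 \ge 3$. For condition~(2) we simply pick any face $F_G$ of the embedding to be the specified face, and take $t$ not to exist; then conditions~(3) and~(4) are vacuous, the latter because a $5$-edge-connected graph on at least three vertices has no $3$-edge-cut at all. Condition~(5) is the only one needing a genuine argument: every vertex $v\in V(G)$ — whether or not it lies on the boundary of $F_G$ — must have $5$ edge-disjoint paths to the boundary of $F_G$. Since $G$ is $5$-edge-connected, contracting the entire boundary of $F_G$ to a single vertex $b$ keeps the graph $5$-edge-connected except possibly at $b$ itself, and by Menger's theorem there are $5$ edge-disjoint $v$--$b$ paths in the contracted graph for any $v\ne b$; these pull back to $5$ edge-disjoint paths from $v$ to the boundary of $F_G$ in $G$. (One must take a small amount of care when $v$ already lies on the boundary, but then the condition is trivially satisfied or else handled the same way.) Thus $G$ together with $p$, the face $F_G$, and no specified vertex $t$ is a PT graph.

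Applying Theorem~\ref{pmain}, the PT graph $G$ has a valid orientation, i.e.\ an orientation $D$ with $\sum_{e=(u,v)}1 - \sum_{e=(v,w)}1 \equiv p(v)\pmod 3$ at every vertex. As noted in the Preliminaries, a valid orientation is equivalent to a nowhere-zero $\mathbb{Z}_3$-flow achieving $p$: orient each edge and assign it the value $1$, which is nowhere zero in $\mathbb{Z}_3$. Since $p$ was an arbitrary valid prescription function, $G$ is $\mathbb{Z}_3$-connected, which is the assertion of Theorem~\ref{main1}.

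I expect the only nontrivial point to be condition~(5), the existence of $5$ edge-disjoint paths to the boundary of the chosen face; everything else is a bookkeeping check against the PT graph definition. Even there the argument is the standard Menger/contraction reduction that uses $5$-edge-connectivity in an essential way, so the proof should be short. The substantive content of the paper is entirely in Theorem~\ref{pmain}; Theorem~\ref{main1} is genuinely a corollary, and the proof is just the verification that the hypotheses of the general theorem are met in this special case.
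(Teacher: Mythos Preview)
Your proposal is correct and is exactly the approach the paper takes: the paper states only that Theorem~\ref{main1} ``is a direct corollary of Theorem~\ref{pmain}'', and what you have written is precisely the routine verification---choosing any face as $F_G$, taking no specified vertex $t$, and checking conditions (1)--(5) of the PT definition (with condition~(5) following from $5$-edge-connectivity via Menger)---that makes this corollary explicit.
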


The case where the prescription function $p$ is such that $p(v)=0$ for all $v\in V(G)$, is the 3-Flow Conjecture for projective planar graphs, shown by \citet{steinbergyounger89}. \\

Unlike in the plane, in the projective plane (Theorem \ref{pmain}) we do not allow a directed vertex. In the plane this is necessary, in order to reduce small edge-cuts. In the projective plane we utilise the property that one of the two graphs resulting from contracting the sides of an edge-cut is a planar graph, and thus are able to apply planar results. \\

In fact, adding a directed vertex $d$ of degree $4$ to the result is not possible. Consider a graph $G$ with specified face $F_G$ such that every vertex is on the boundary of~$F_G$. Let $t$ be a degree $3$ vertex on the boundary of $F_G$, and $w$ a degree $5$ vertex on the boundary of $F_G$, adjacent to $t$ via a non-contractible chord. Let $k\in \mathbb{Z}^{\geq 0}$. Let the two paths between $t$ and $w$ on the boundary of $F_G$ be $P_1=tu_1u_2...u_nw$ and $P_2=tv_1v_2...v_nw$ where $n=3k+5$ and all vertices in $G$ aside from $t$ and $w$ have degree~$4$. We assume that for all $1\leq i\leq n$, $u_i$ is adjacent to $u_{i-1}$, $u_{i+1}$, $v_{n-i+1}$, and $v_{n-i+2}$, where we set $u_0=v_0=t$ and $u_{n+1}=v_{n+1}=w$. Finally, $w$ and $v_1$ are adjacent. We let $d$ be~$v_{n-1}$. See Figure~\ref{pstar}.\\

\begin{figure}[t]
\begin{center}
\begin{tikzpicture}[scale=1.4]
\draw (0,0) circle (100pt);

\filldraw (0,3.5) circle (2pt) node[above] {$w,0$};
\filldraw (1.08,3.33) circle (2pt) node[above] {$v_n,+1$};
\filldraw (-1.08,3.33) circle (2pt) node[above] {$u_n,+1$};
\filldraw (2.06,2.83) circle (2pt) node[right] {$d,-1$};
\filldraw (2.83,2.06) circle (2pt) node[right] {$v_{n-2},+1$};

\filldraw (0,-3.5) circle (2pt) node[below] {$t,0$};
\filldraw (1.08,-3.33) circle (2pt) node[below] {$v_1,+1$};
\filldraw (-1.08,-3.33) circle (2pt) node[below] {$u_1,-1$};
\filldraw (2.06,-2.83) circle (2pt) node[right] {$v_2,-1$};
\filldraw (-2.06,-2.83) circle (2pt) node[left] {$u_2,+1$};
\filldraw (-2.83,-2.06) circle (2pt) node[left] {$u_3,+1$};
\filldraw (-3.33,-1.08) circle (2pt) node[left] {$u_4,+1$};

\draw (0,3.5) -- (0,-3.5);
\draw (0,3.5) -- (1.08,-3.33);
\draw (1.08,-3.33) -- (-1.08,3.33);
\draw (-1.08,3.33) -- (2.06,-2.83);
\draw (2.06,-2.83) -- (1.65,-2.26);

\draw (0,3.5) -- (-1.08,-3.33);
\draw (-1.08,-3.33) -- (1.08,3.33);
\draw (1.08,3.33) -- (-2.06,-2.83);
\draw (-2.06,-2.83) -- (2.06,2.83);
\draw (2.06,2.83) -- (-2.83,-2.06);
\draw (-2.83,-2.06) -- (2.83,2.06);
\draw (2.83,2.06) -- (-3.33,-1.08);
\draw (-3.33,-1.08) -- (-2.66,-0.86);

\draw (2.49,2.49) circle (0pt) node[rotate=-45] {$>$};
\draw (1.6,3.13) circle (0pt) node[rotate=155] {$>$};
\draw (1.3,2.07) circle (0pt) node[rotate=-135] {$>$};
\draw (1.43,1.97) circle (0pt) node[rotate=-125] {$>$};
\end{tikzpicture}
\caption{A projective planar graph drawn in the plane with a directed vertex of degree $4$ and no valid orientation.}
\label{pstar}
\end{center}
\end{figure}
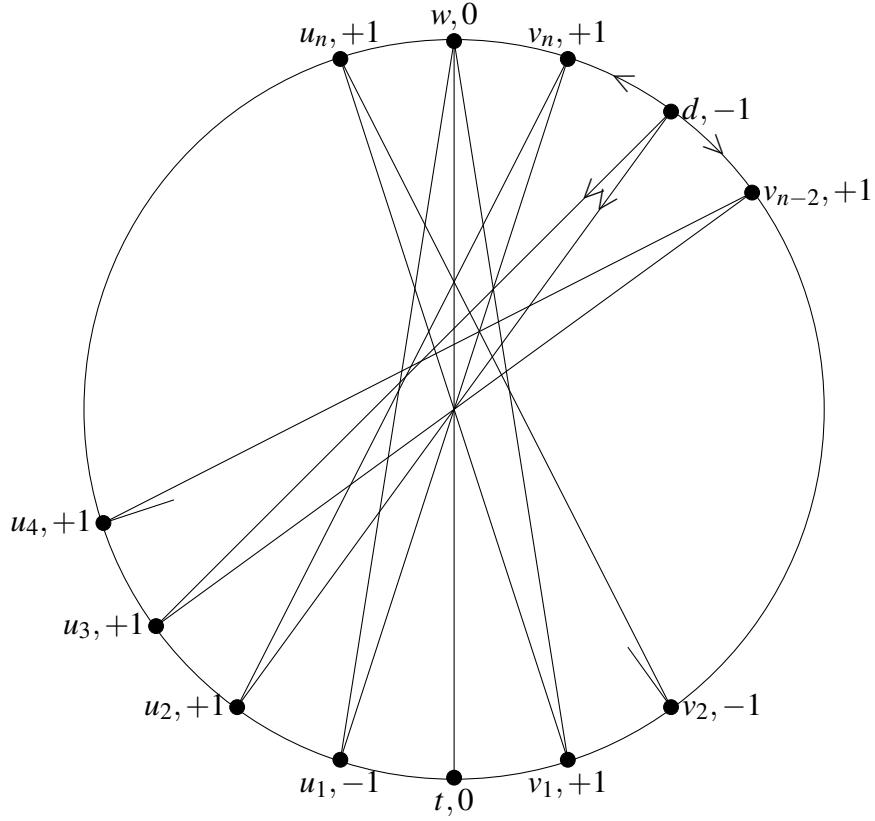


Suppose that $p(d)=-1$ and all edges incident with $d$ are directed out from~$d$. Let $p(v_n)=p(v_{n-2})=p(u_2)=p(u_3)=+1$. Let $p(u_1)=-1$. Let $p(t)=p(w)=0$. Finally, let all other vertices have prescription~$+1$. We have 
$$p(G)=4(+1)+2(-1)+2(0)+(2n+2-8)(+1)=2n-4=6k+6\equiv 0\pmod{3}.$$
Therefore $p$ is a valid prescription function for~$G$. \\

\begin{lemma}
The graph $G$ does not have a valid orientation that meets $p$.
\end{lemma}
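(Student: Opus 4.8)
The plan is to suppose $G$ has a valid orientation $D$ extending the prescribed orientation at $d=v_{n-1}$, and to derive a contradiction by propagating forced edge‑directions around $G$. First I would record the ``type'' each vertex can have in $D$, using only its degree, its prescription, and the fact that its in‑degree minus its out‑degree has the same parity as its degree and is congruent to its prescription modulo $3$: the vertex $t$ must be a source or a sink; $w$ has in‑degree $1$ or $4$; every degree‑$4$ vertex with prescription $+1$ has in‑degree $1$ or $4$; and $u_1$ (like $d$) has out‑degree $0$ or $1$. Thus every vertex of $G$ is either \emph{out‑heavy} (at most one incoming edge) or \emph{in‑heavy} (at most one outgoing edge). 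Since all four edges at $d$ point away from $d$, the edges $dv_{n-2}$, $dv_n$, $du_2$, $du_3$ are forced incoming edges at $v_{n-2},v_n,u_2,u_3$ respectively.

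Next I would run the propagation. Considering the edge $u_2u_3$ together with the forced incoming edges at $u_2$ and $u_3$, one of $u_2,u_3$ is out‑heavy (with its unique incoming edge being the one from $d$) and the other is in‑heavy; take this as two cases. In either case the directions of $u_1u_2$, $u_2v_n$, $u_3u_4$ and $u_3v_{n-2}$ are now determined; this forces the type of $v_n$ (hence the directions of $v_nw$ and $u_1v_n$) and the type of $v_{n-2}$, and so on. The engine of the argument is that once two of the four edges at a degree‑$4$ vertex with prescription $\pm1$ are oriented, its type is forced, and -- provided those two edges are not both directed out of the vertex -- all four of its edges are determined. One checks that under this rule the orientation propagates along the zigzag $u_3,v_{n-2},u_4,v_{n-3},u_5,v_{n-4},\dots,u_n,v_1$, each vertex being reached with exactly two oriented edges, one from each of its two predecessors, and never both outgoing, until every edge of $G$ except the triangle $tu_1,u_1w,tw$ on $\{t,u_1,w\}$ has been oriented; moreover $v_1$ ends up out‑heavy, so $v_1\to t$, and hence $t$ is a sink and $u_1\to t$. (It is in confirming that $v_1$ comes out out‑heavy -- tracking the period‑three pattern of types along the zigzag -- that the hypothesis $n=3k+5$, i.e.\ $n\equiv2\pmod 3$, is used: in both cases of the split the position of $v_1$ along the zigzag falls in a residue class that is out‑heavy.)

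Finally I would close the argument. The three edges $u_1u_2$, $u_1v_n$, $u_1t$ at $u_1$ are now oriented; in both cases of the split, $u_1u_2$ and $u_1v_n$ consist of one incoming and one outgoing edge, and $u_1t$ is outgoing, so the in‑degree minus the out‑degree of $u_1$ restricted to these three edges equals $1-2=-1\equiv p(u_1)\pmod 3$. But the fourth edge $u_1w$ must also be oriented, and then this quantity becomes $0$ or $-2$, neither congruent to $p(u_1)=-1$ modulo $3$; so $D$ cannot exist. (Equivalently: deleting the already‑oriented edges at $t,u_1,w$ leaves a triangle with reduced prescription $\equiv(-1,-1,-1)$, and a triangle realizes only the prescriptions whose value multiset modulo $3$ is $\{0,0,0\}$ or $\{0,1,2\}$.) I expect the main obstacle to be the bookkeeping in the propagation for general $n$: carrying the zigzag through both cases of the initial split, handling the ``end effects'' at $u_1,v_n$ and where the two chains reach $t$ and $w$, and verifying that the zigzag indeed terminates with $v_1$ out‑heavy and with the claimed incoming/outgoing counts at $u_1$. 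Everything after that is a one‑line computation modulo $3$.
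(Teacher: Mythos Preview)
Your proposal is correct and follows essentially the same propagation argument as the paper: force edge directions outward from $d$ through the zigzag of degree-$4$ vertices with prescription $+1$, use $n\equiv 2\pmod 3$ to land the propagation at $v_1$ with $v_1\to t$, and obtain a contradiction at a vertex of the triangle $\{t,u_1,w\}$.

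The paper's execution is a bit slicker in two respects, which you may want to adopt. First, it avoids your case split on $\{u_2,u_3\}$: since $v_n$ and $u_2$ are adjacent and each has its remaining three edges all in or all out, $u_1$ automatically sees one in and one out from $\{u_2,v_n\}$, so $p(u_1)=-1$ forces $u_1t$ and $u_1w$ into $u_1$ \emph{before} any propagation; the induction then runs only over the period-$3$ vertices $u_{3j+1}$ and $v_{n-3i-1}$, where this same ``one in, one out from two adjacent complementary neighbours'' structure recurs case-independently, so you never branch. Second, having already pinned down $t\to u_1$, the paper places the contradiction at $t$ (degree $3$, $p(t)=0$, must be a source or sink but now has one edge in and one out) rather than at $u_1$, which is marginally cleaner.
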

\begin{proof}
The edges incident with $d$ meet the prescription of $v_n$, $v_{n-2}$, $u_2$, and $u_3$, hence each of these vertices has all remaining edges directed in or all remaining edges directed out. Since $v_n$ and $u_2$ are adjacent, one has all edges directed in and the other has all edges directed out. Consider~$u_1$. The edges $u_1u_2$ and $u_1v_n$ are directed one in and one out of~$u_1$. Thus the remaining edges incident with $u_1$: $u_1t$ and $u_1w$, are directed into $u_1$ to satisfy~$p(u_1)$. \\

We show that for all $1\leq j\leq \frac{n-2}{3}$, $u_{3j+1}u_{3j+2}$ and $u_{3j+1}u_{n-3j}$ are directed out of $u_{3j+1}$, and that for all $0\leq i\leq \frac{n-2}{3}$, $v_{n-(3i+1)}v_{n-(3i+1)-1}$ and $v_{n-(3i+1)}u_{3i+3}$ are directed out of $v_{n-(3i+1)}$. We consider the sequence $d, u_{4}, v_{n-4}, u_{7}, v_{n-7}, ..., u_{n-1},v_1$. The property is known to hold for~$d$. Let $x$ be the first vertex in this sequence for which this property does not hold. \\

Suppose that $x=u_{3j+1}$ for some $1\leq j\leq \frac{n-2}{3}$. Then by definition, the property holds for~$v_{n-(3j-2)}$. Hence $v_{n-(3j-2)}v_{n-(3j-2)-1}$ and $v_{n-(3j-2)}u_{3j}$ are directed out of~$v_{n-(3j-2)}$. This satisfies the prescription of $v_{n-(3j-2)-1}$ and $u_{3j}$, which are adjacent, and so the remaining three edges at one of these vertices are directed in, and the remaining three edges at the other are directed out. Both are adjacent to $u_{3j+1}$, so the edges $u_{3j+1}v_{n-(3j-2)-1}$ and $u_{3j+1}u_{3j}$ are directed one in and one out of~$u_{3j+1}$. Hence the remaining two edges incident with $u_{3j+1}$ are directed out of $u_{3j+1}$, as required. \\

Now suppose that $x=v_{n-(3j+1)}$ for some $1\leq j\leq \frac{n-2}{3}$. Then by definition, the property holds for~$u_{3j+1}$. Hence $u_{3j+1}u_{3j+2}$ and $u_{3j+1}v_{n-3j}$ are directed out of~$u_{3j+1}$. This satisfies the prescription of $u_{3j+2}$ and $v_{n-3j}$, which are adjacent, and so the remaining three edges at one of these vertices are directed in, and the remaining three edges at the other are directed out. Both are adjacent to $v_{n-(3j+1)}$, so the edges $v_{n-(3j+1)}u_{3j+2}$ and $v_{n-(3j+1)}v_{n-3j}$ are directed one in and one out of~$v_{n-(3j+1)}$. Hence the remaining two edges incident with $v_{n-(3j+1)}$ are directed out of $v_{n-(3j+1)}$, as required. \\

Therefore, $u_1t$ is directed out of $t$, and $v_1t$ is directed into~$t$. Hence no direction of $tw$ meets~$p(t)$. Thus $G$ has no valid orientation that meets~$p$.
\end{proof}

In \citet{paper1}, Conjecture 4 generalises Theorem \ref{mainft} by allowing both $t$ and $d$, provided that $deg(d)=3$. We make the analogous conjecture here, that we may have both $t$ and $d$ in Theorem \ref{pmain}, provided $deg(d)=3$.

\bibliography{ResearchProposal}
\bibliographystyle{myapalike}

\end{document}